\def\x{\mathbf x}
\def\y{\mathbf y}
\def\z{\mathbf z}
\def\X{\mathbf X}
\def\Y{\mathbf Y}
\def\Z{\mathbf Z}
\def\R{\mathbb R}
\def\U{\mathbf U}
\def\V{\mathbf V}
\def\v{\mathbf v}
\def\I{\mathbf I}
\def\E{\mathbf E}
\def\d{\mathbf d}
\def\D{\mathbf D}
\def\n{\mathbf n}
\def\B{\mathbf B}
\def\Sparsifier{{\mathcal T}}
\def\SignalClass{{\mathcal X}}
\newtheorem{theorem}{\textit{Theorem}}
\newtheorem{remark}{\textit{Remark}}
\newtheorem{lem}{\textit{Lemma}}
\newtheorem{proposition}{\textit{Proposition}}
\newtheorem{definition}{\textit{Definition}}
\newcommand{\sgn}{\operatorname{sgn}}
\newcommand{\esgn}{\overline{\operatorname{sgn}}}
\newcommand{\tr}{\operatorname{tr}}
\newcommand{\argmin}{\operatorname{argmin}}
\newcommand{\subto}{\operatorname{s.t.}}
\newcommand{\vect}{\operatorname{vect}}
\newcommand{\Omg}{\mathbf{\Omega}}
\newcommand{\omg}{\mathbf{\omega}}
\newcommand{\norm}[1]{\|#1\|}
\begin{document}

\title{Constrained Overcomplete Analysis Operator Learning for Cosparse Signal Modelling}

\author{Mehrdad~Yaghoobi, Sangnam Nam, R{\'e}mi Gribonval and Mike E. Davies
        
\thanks{This work was supported by the EU FP7, FET-Open grant number 225913, the European Research Council - PLEASE project under grant ERC-StG-2011-277906 and EPSRC grants EP/J015180/1 and EP/F039697/1. MED acknowledges support of his position from the Scottish Funding Council and their support of the Joint Research Institute with the Heriot-Watt University as a component part of the Edinburgh Research Partnership.

}

}

\maketitle

\begin{abstract}
We consider the problem of learning a low-dimensional signal model from a collection of training samples. The mainstream approach would be to learn an overcomplete dictionary to provide good approximations of the training samples using sparse synthesis coefficients. This famous sparse model has a less well known counterpart, in analysis form, called the cosparse analysis model. In this new model, signals are characterised by their parsimony in a transformed domain using an overcomplete (linear) {\em analysis operator}. We propose to learn an analysis operator from a training corpus using a \emph{constrained} optimisation framework based on L1 optimisation. The reason for introducing a constraint in the optimisation framework is to exclude trivial solutions. Although there is no final answer here for which constraint is the most relevant constraint, we investigate some conventional constraints in the model adaptation field and use the uniformly normalised tight frame (UNTF) for this purpose. We then derive a practical learning algorithm, based on projected subgradients and Douglas-Rachford splitting technique, and demonstrate its ability to robustly recover a ground truth analysis operator, when provided with a clean training set, of sufficient size. We also find an analysis operator for images, using some noisy cosparse signals, which is indeed a more realistic experiment. As the derived optimisation problem is not a convex program, we often find a local minimum using such variational methods. For two different settings, we provide preliminary theoretical support for the well-posedness of the learning problem, which can be practically used to test the local identifiability conditions of learnt operators.
\end{abstract}

\begin{IEEEkeywords}
Sparse Representations, Low-dimensional Signal Model, Analysis Sparsity, Cosparsity, Dictionary Learning
\end{IEEEkeywords}

\IEEEpeerreviewmaketitle

\section{Introduction}
\IEEEPARstart{M}{any} natural signals can be modelled using low-dimensional structures. This means, although the investigated signals are distributed in a high dimensional space, they can be represented using just a few parameters in an appropriate model \cite{Baraniuk10}. This is made possible by the fact that few parameters are actually involved in generating/describing such signals. Examples include polyphonic music, speech signals, cartoon images (where some lines/edges describe the images) and standard videos, where the scenes change slightly frame to frame. In modern signal processing, low-dimensional modelling is one of the most effective approaches to clarify the ambiguities in inverse problems, regularise the signals and overcome some conventional physical, temporal and computational barriers. This model has been heavily investigated in the last decade and many remarkable results achieved in almost all signal processing applications, see for example \cite{Starck10,Elad10}.

A natural and crucial question immediately arises. Given a class of signals that we are interested in, how can we model its low-dimensional structures? As an abstract answer, one could imagine that if $\SignalClass \subset \R^{n}$ is the signal class with low-dimensional structures, then there must be a map $\Sparsifier: \R^{n} \to \R^{a}$ such that $\Sparsifier (\x)$ is sparse for all $\x \in \SignalClass$. Unfortunately, finding such $\Sparsifier$ out of all possible
maps would be too complex, and one would have to restrict admissible class of maps. A simple option is to consider the class of linear maps. Hence, we ask: is there $\Omg \in \R^{a\times n}$ such that
$\Omg\x$ is sparse for $\x \in \SignalClass$? This is the type of problem we focus on in this paper. Before getting to the nitty-gritty, we review relevant results to give some contexts to our approach.

\subsection{Sparse Signal Models}
The most familiar type of low-dimensional structure is the sparse synthesis model \cite{Mallat93,Chen98}. In this setting, if a signal $\x \in \mathbb{R}^{n}$ can approximately be generated by adding just a few elementary vectors $\{ \d_{i}\}$, called atoms, from an overcomplete set, called a dictionary, as follows,

\begin{equation}\label{eq:lgm}
 \x \approx \sum_{\overset{i \in \mathcal{I}}{v_{i} \ne 0}}v_{i} \d_{i} = \D \v,
\end{equation}
 where $\D \in \R^{n \times p}$ is the dictionary (matrix) and $\mathcal{I}$ is an index set with small cardinality, \textit{i.e.} $|\mathcal{I}| \ll n$, it is called \textit{(approximately) sparse in $\mathbf{D}$}. From the signal processing perspective, it is then important to identify the support $\mathcal{I}$. However, this is not an easy task \cite{Davis94,Natarajan95}: when the signals are at most $k$-sparse---this means $|\mathcal{I}| \le k$---in model (\ref{eq:lgm}), they live in a union of subspaces (UoS) model \cite{Lu08,Blumensath09a,Eldar09} where each subspace has a dimension of at most $k$, and the number of such subspaces is exponential. A popular and practical approach to find a sparse $\v$ is the $\ell_1$-minimisation \cite{Chen98} in which one looks for $\v$ that minimises $\norm{\v}_1$ among the ones that satisfy $\x \approx \D\v$. The attractiveness of the $\ell_1$-objective comes from the fact that it promotes sparsity and is convex---good in computational aspects---at the same time.

An alternative form of $\ell_1$-minimisation has been used in practice. In this form, one looks for $\x$ that minimises $\norm{\Omg\x}_1$ among the signals that matches available (linear) information. Here, $\Omg\in\R^{a\times n}$ is some known linear operator. Note that a solution of this $\ell_1$-minimisation necessarily leads to many zeros in $\Omg\x$ and hence is orthogonal to many rows of $\Omg$ (see Figure \ref{fig:failure}.a). This observation is not exactly compatible with the synthesis interpretation above, and the corresponding low-dimensional model is called the \emph{cosparse analysis model} \cite{Nam11b}. A signal $\x$ in this model is called \emph{cosparse}. $\Omg$ is referred to as the \emph{analysis operator} in this paper. Although this model is also an instance of the UoS model and relies on the vector form of parsimony, it has many structural differences with the synthesis sparsity model \cite{Elad07c,Nam11b}. Very recently the signal recovery using analysis sparsity model has been investigated \cite{Candes11,Nam11b,Vaiter11}. As can be deduced from the expression $\Omg\x$, the cosparse analysis model is intimately related to our work.

\subsection{Model Adaptation}
As can be seen from the synthesis model description, the dictionary $\D$ plays an important role.
When we deal with the natural signals, one can then naturally ask, how good we can choose such a dictionary? 
Generally, we can use some signal exemplars \cite{Olshausen97,Lewicki00} for learning a dictionary or some domain knowledge for designing a suitable dictionary \cite{Donoho01a,Yaghoobi09e}. The readers can refer to \cite{Rubinstein10} and \cite{Tosic11} as some surveys on the dictionary selection approaches. It has been shown that the learned dictionaries usually out perform the canonical dictionaries, \textit{e.g.} DCT and Wavelet, in practice. Online dictionary learning methods \cite{Mairal10} have also been introduced to adapt the generative model to a large set of data or a time-varying system. 

In the exemplar based dictionary learning methods, a set of training signals $\X \in 
\R^{n \times l}$ is given. Similar to sparse approximations, the $\ell_1$ norm is often preferred as the sparsity-promoting objective \cite{Lewicki00,Lesage05}, and the dictionary learning with this objective can be formulated as follows:
\begin{equation}\label{eq:dl}
\min_\mathsmaller{\V,\D \in \mathcal{D}} \|\V	 \|_{1} + \frac{\lambda}{2} \|\X - \D \V \|_{F}^2 ,
\end{equation}
where $\|\cdot\|_1 = \sum_{i,j} |\{\cdot\}_{i,j}|$ and $\mathcal{D}$ is an admissible set.
Here, $\D \in \R^{n\times p}$ is the dictionary we want to learn and $\V \in \R^{p\times l}$ is the coefficient matrix which is expected to be sparse column-wise.
The reason for $\mathcal{D}$ is to exclude trivial solutions, such as $\X = \D \V$, with $\V \rightarrow \mathbf{0}$ and $\D \rightarrow \infty$. This type of problem is called the \emph{scale ambiguity} of the signal modelling. Various constraints for the resolution of scale ambiguity have been proposed for dictionaries: fixed atom norms \cite{Engan99a,Olshausen97}, fixed Frobenius norm \cite{KreutzDelgado03} and convex balls made using these norms \cite{Lee06,Yaghoobi08a,Mairal10}. Note that the optimisation \eqref{eq:dl} is no longer convex and can be difficult to solve. Different techniques have been proposed to find a sensible solution for (\ref{eq:dl}), where they are often based on alternating optimisations of the objective based upon $\V$ and $\D$. See \cite{Rubinstein10} and \cite{Tosic11} for a review on state of the art methods.

Designing (overcomplete) linear transforms to map some classes of signals to another space with a few significant elements, is not a new subject. It has actually been investigated for almost a decade and many harmonic/wavelet type transforms have been designed with some guarantees on fast decay of coefficients in the transform domain (See for example \cite{Candes05} and \cite{Demanet07}). These transforms are designed such that the perfect reconstructions of the signals are possible, \textit{i.e.} bijective mapping. 

Relatively few research results have been reported in the exemplar based adaptation of analysis operators. This is an important part of data modelling, which has the potential to improve the performance of many current signal processing applications. 

\subsection{Contributions}

This paper investigates the problem of learning an analysis operator from a set of exemplars, for cosparse signal modelling. Our approach to analysis operator learning can be viewed as the counterpart to the optimisation \eqref{eq:dl} in the analysis model setting. Here is the summary of our contributions in this work:

\begin{enumerate}
 \item We propose to formulate \emph{analysis operator learning} (AOL) as a constrained optimisation problem.  Perhaps surprisingly, in contrast to its synthesis equivalent---the dictionary learning problem---the natural formulation of the  analysis operator learning problem as an optimisation problem has trivial solutions even when scaling issues are dealt with. The constraints presented in this report exclude such trivial solutions but are also compatible with a number of conventional analysis operators, e.g. curvelets \cite{Candes05}  and wave-atoms \cite{Demanet07}, giving further support for our proposal.
 \item We provide a practical optimisation approach for AOL problem, based upon projected subgradient method. Clearly, an AOL principle which does not permit computationally feasible algorithm is of limited value. By implementing a practical algorithm, we also open the possibility to cope with large scale problems.
\item We give preliminary theoretical results toward a characterisation of the local optimality conditions in the proposed constrained optimisation problem. This helps us to better understand the nature of the admissible operator set. It also has potential use in the initialisation of our AOL problem.
\end{enumerate}

Our approach is developed from the idea that was primarily suggested in \cite{Yaghoobi11} and \cite{Yaghoobi12}.

\subsection{Related Work}

Remarkably, even though the concept of cosparse analysis modelling has only been very recently pointed out as distinct from the more standard synthesis sparse model \cite{Nam11b,Nam11}, it is already gaining momentum and a few other approaches have already been proposed to learn analysis operators.
The most important challenge in formulation of the AOL as an optimisation problem is to avoid various trivial solutions.
Ophir \textit{et al.} used a random cycling approach to statistically avoiding the optimisation problem becoming trapped in a trivial solution \cite{Ophir11}. Peyr{\'e} \textit{et al.} used a geometric constraint, using a linearisation approach, in the operator update step, to get a sensible \textit{local optimum} for the problem \cite{Peyre11}. In a recent approach, Elad \textit{et al.} have introduced a K-SVD type approach to update each row of the operator at a time \cite{Elad11,Rubinstein12}. 
While these approaches have shown promising empirical results, a specificity of our approach is its explicit expression as an optimisation problem.\footnote{Peyr{\'e} and Fadili \cite{Peyre11} use a similar type of expression to learn an analysis operator in the context of signal denoising, with exemplars of noisy and clean signals.} We expect that this will open the door to a better understanding of the AOL problem, through mathematical characterisations of the optima of the considered cost function.

\subsection{Notation and Terminology}

In this paper we generally use bold letters for vectors and bold capital letters for matrices. We have presented a list of the most frequent parameters in Table \ref{tab:parameters}. We have also presented the corresponding parameters and their range-spaces, used in the related papers, in the same table. The notation $( \: \cdot \: )_{i,j}$, or simply subscript $_{i,j}$, has been used to specify the element locating in the $i^{th}$ row and $j^{th}$ column of the operand matrix. $ \| \cdot \|_1 $, $ \| \cdot \|_2 $ and $ \| \cdot \|_\mathsmaller{F} $ are respectively $\ell_1$, $\ell_2$ and Frobenius norms. With an abuse of notations, we will use $\|\cdot\|_1$ for the norm defined by the entrywise sum of absolute values of the operand matrix, which is different from the $\ell_1$ operator norm for matrices. The notation $\left< \: , \: \right>$ represents the canonical inner-product of two operands for, respectively, $\ell_2$ and Frobenius norms, in the vector-valued and matrix-valued vector spaces. $\tr\{\cdot\}$ denotes the trace of the operand matrix.
The notation $\mathcal{P}$ will be used to represent the orthogonal projection whose range is specified by the subscript, \textit{e.g.} $\mathcal{P}_\mathsmaller{UN}$. Sub- and super- scripts in braces are used to indicate iteration number in the algorithm section. 

The sparsity of $\x$ in $\mathbf{D}$ is noted here by $k$, which is the cardinality of the support of $\v$, where $\v$ is the sparse representation of $\x$. We can similarly define the cosparsity of $\x$ \cite{Nam11}, with respect to $\Omg$, and note it by $q$, which is the cardinality of its co-support, \textit{i.e.} $\operatorname{co-support}(\x) = \{i : 1 \le i \le a, \{\Omg \x\}_i = 0\}$.

Following the notation of \cite{Gribonval10}, a sub-indexed vector or matrix will be determined using a subscript for the original parameter, \textit{e.g.} $\Omg_\mathsmaller{\Lambda}$. Here, $\Omg_\mathsmaller{\Lambda}$ has the dimension of $\Omg$, identical values as $\Omg$ in its support $\Lambda$ and zeros elsewhere. We also use ``bar'', \textit{e.g.} $\bar{\Lambda}$, to denote a complement of the index set $\Lambda$ in this context.

\begin{table}[t]
\caption{The notation of current and their corresponding notations in related papers.}\label{tab:parameters}
\centering
\begin{tabular}{ l  c  c  c  c }
	&	This paper	&	PF\cite{Peyre11}	& \begin{tabular}{c} RFE \cite{Rubinstein12} \\ (OEBP\cite{Ophir11})	\end{tabular} \\
\hline
\\
Observation	&	$\y \in \R^n$	&	$\y \in \R^N$		&	$\y \in \R^d$	\\
Signal		&	$\x \in \R^n$	&	$\x \in \R^N$		& 	$\x \in \R^d$\\
Analysis vector	&	$\z \in \R^a$	&	-			&	-	\\
Dictionary 	&	$\D \in \R^{n \times p}$	&	$\D \in \R^{N \times P}$	&	$\D \in \R^{d \times n}$\\
Synthesis vector	&	$\v \in \R^p$	&	$\mathbf{u} \in \R^P$		&	$\z \in \R^n$\\
Analysis operator	&	$\Omg \in \R^{a \times n}$ &	$\D^*  \in \R^{P \times N}$	&	$\Omg \in \R^{p \times d}$	\\
Training size	&	$l$		&	$n$	&  $R$ ($N$)\\
Cosparsity	&	$q$		&	-	& $l$\\
Sparsity	&	$k$		&	-	& $k$ \\
\end{tabular} 
\end{table}

\subsection{Organisation of the Paper}

In the following section, we formulate the analysis operator learning problem as a constrained optimisation problem. We briefly review some of canonical constraints for the exemplar based learning frameworks and explain why they are not enough. we introduce a new constraint to make the problem `well-posed' in \ref{sec:untf}. 
After the formulation of the AOL optimisation problem, we introduce a practical algorithm to find a sub-optimal solution for the problem in section \ref{sec:algorithm}. Following the algorithm, in Section \ref{sec:simulations}, we will present some simulation results for analysis operator learning with different settings. The simulations are essentially based on two scenarios, synthetic operators and operators for image patches. In the appendix, we will investigate the local optimality conditions of operators for the proposed learning framework. 

\section{Analysis Operator Learning Formulation}\label{sec:AOLF}

The aim of analysis operator learning is to find an operator $\Omg$ adapted to a set of observations of the signals $\y_i := {\mathcal{M}}(\x_i)$, where $\x_i$ is an element (column) of sample data $\X\in\R^{n\times l}$ and ${\mathcal{M}}(\x_i)$ denotes the information available for the learning algorithm. For our purposes, we set $\mathcal{M}(\x_i) := \x_i + \n_i$, where $\n_i \in \R^{m}$ denotes the (potential) noise. The data $\X$ is assumed to be of full rank $n$ since otherwise, we can reduce the dimension of the problem with a suitable orthogonal transform and solve the operator learning in the new low dimension space.

A standard approach for these types of model adaptation problems, is to define a \emph{relevant} optimisation problem such that its optimal solution promotes maximal sparsity of $\mathbf{Z} := \Omg \mathbf{X}$. The penalty $\norm{\cdot}_p := \sum |\cdot|^p, \: 0 < p \le 1$ can be used as the sparsity promoting cost function. As hinted in the introduction, we will use the convex $\ell_1$ penalty. The extension to an $\ell_p, \: p<1 ,$ AOL is left for a future work.

\subsection{Constrained Analysis Operator Learning (CAOL)}

Momentarily assume that we know $\X$. Unconstrained minimisation of $\|\Omg \X\|_1$, based on $\Omg$, has some trivial solutions. A solution for such a minimisation problem  is $\Omg = \mathbf{0}$! A suggestion to exclude such trivial solutions is to restrict the solution set to an admissible set $\mathcal{C}$. Not assuming that $\X$ is known any more, AOL can thus be formulated as,

\begin{equation}\label{eq:l1stl2C}
 \min_{\Omg,\X} \|\Omg \mathbf{\X}\|_1,\ \subto \ \Omg \in \mathcal{C}, \ \norm{\Y - \X}_F \le \sigma
\end{equation}
where $\sigma$ is the parameter corresponding to the noise. We call the AOL to be a noiseless operator learning when $\sigma = 0$. 

We prefer to use an alternative, regularised version of (\ref{eq:l1stl2C}), using a Lagrangian multiplier $\lambda$, to simplify its optimisation. The reformulated AOL is the following problem,

\begin{equation}\label{eq:l1l2stC}
 \min_{\Omg,\X} \|\Omg \mathbf{X}\|_1 + \frac{\lambda}{2} \|\Y - \X \|_{F}^2,\ \subto \ \Omg \in \mathcal{C}
\end{equation}

If $\lambda \rightarrow \infty$, problem (\ref{eq:l1l2stC}) is similar to the noiseless case.
In the following section, we explore some candidates for $\mathcal{C}$.

\begin{figure}[t]
\centering
\centerline{\epsfig{figure=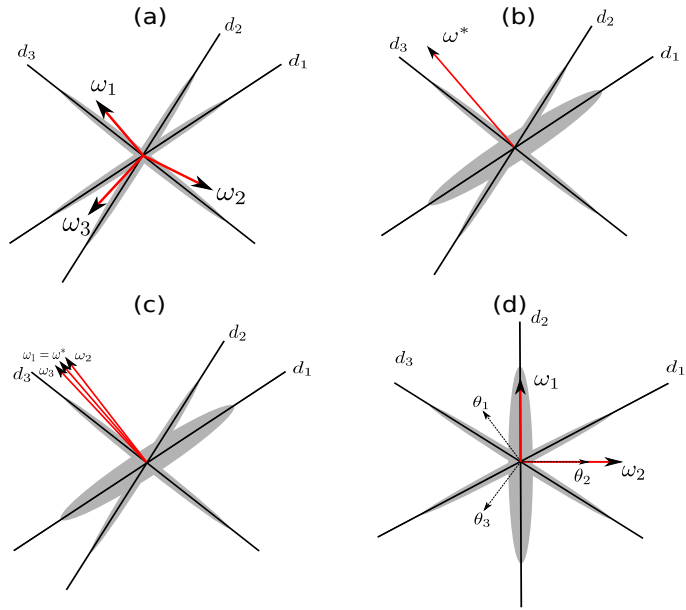,width=9cm, height=8cm}}
\caption{Data clouds around some union of subspaces and possible analysis operators: a) an ideal operator, b) the optimal (rank-one) operator using a row norm constraint, c) the optimal operator using the full-rank, row norm constraint and d) the optimal operator using tight frame constraint.}
\label{fig:failure}
\end{figure}
%
%
%

\subsection{Constraints for the Analysis Operators}

A suitable admissible set should exclude undesired solutions of AOL while minimally affecting the rest of the search space. We name $\Omg = \mathbf{0}$ and $\operatorname{Rank}\{\Omg\} = 1$ as some undesired solutions of (\ref{eq:l1stl2C}). These operators clearly do not reveal low-dimensional structures of the signals of interest. For simplicity, we again assume that $\X$ is given, \textit{i.e} $\sigma = 0$. 

Here, we intially propose some constraints for the problem (\ref{eq:l1stl2C}) and explain why some of them can not individually exclude undesired solutions. A combined constraint $\mathcal{C}$, which is the Uniform Normalised Tight Frame (UNTF), will be introduced subsequently. The proposed constraint is smooth and differentiable.

\subsubsection{Row norm constraints are insufficient}
The first constraint is on the norms of rows of $\Omg$, \textit{i.e.} $\|\omg_i\| = c$ for the $i^{th}$ row. We can find the solution of the optimisation by first finding $\omg^* \in \mathbb{R}^n$ that minimises $\|\omg^T \X\|_1$. Then, the optimum is obtained by repeating $\omg^*$, i.e., $\Omg_{1}^* := [\omg_i = \omg]_{i \in [1,a]}^\mathsmaller{T}$ is the minimum. Of course, $\operatorname{Rank} \{ \Omg_{1}^* \} = 1$ (see Figure \ref{fig:failure}.b), and the solution is not interesting enough.


\subsubsection{Row norm + full rank constraints are insufficient}


The set $\mathcal{C}_F$ of full rank operators is not closed, and the operator $\Omg_{1}^*$ belongs to its closure. As a result, the problem (\ref{eq:l1stl2C}) with $\sigma = 0$ does not admit a minimiser but there is a sequence of operators, arbitrarily close to $\Omg_{1}^*$, that yield an objective function arbitrarily close to its infimum value.

\subsubsection{Tight frame constraints are insufficient}
In a complete setting $a = n$, an orthonormality constraint can resolve the ill-conditioning of the problem. The rows of $\Omg$ are geometrically as separated as possible. Letting $a > n$, the orthonormality constraint is not further applicable, \textit{i.e.} more rows than the dimension of space. In this case, the orthonormality constraint in the ambient space, $\forall i \ne j, \ \omg^i \bot \omg^j$ and $\|\omg^i\|_2  = \|\omg^j\|_2  = 1$, where $\omg^i \operatorname{and} \omg^j \in \mathbb{R}^{a}$ are respectively the $i^{th}$ and $j^{th}$ \emph{columns} of $\Omg$, is possible. The admissible set of this constraint is the set of tight frames in $\mathbb{R}^{a \times n}$, \textit{i.e.} $\Omg^T \Omg = \mathbf{I}$, where $\mathbf{I}$ is the identity operator in $\mathbb{R}^{n}$. The admissible set $\mathcal{C} = \{\Omg \in \mathbb{R}^{a \times n} : \Omg^T \Omg = \mathbf{I}\}$ is smooth and differentiable, which is a useful property for optimisation over this set.
Such a constraint actually constructs a manifold in the space of $\mathbb{R}^{a \times n}$, called the (orthogonal) Stiefel manifold $\operatorname{St}(a,n)$ \cite{Absil08}. 


Although the tight frame constraint may look appropriate to avoid ``trivial'' solutions to (\ref{eq:l1stl2C}), preliminary empirical and theoretical investigations indicate that the analysis operator minimising (\ref{eq:l1stl2C}), using this constraint, is always an orthonormal basis completed by zero columns (see Figure \ref{fig:failure}.d). This is possibly caused by the fact that the zero elements in such operators, are orthogonal to \textbf{all} finite signals, without giving any insight about their directions, and the operators can thus be truncated to produce complete operators. Therefore, this constraint does not bring anything new compared to the orthonormality constraint.

\subsubsection{Proposed constraint: Uniform Normalised Tight Frame} \label{sec:untf}
This motivates us to apply an extra constraint. Here, we choose to combine the uniformly normalised rows and the tight frame constraints, yielding the UNTF constraint set. 
This constraint will be used in the rest of paper and the analysis of Appendix \ref{app:theory} is based on this admissible set.

The UNTF's are actually in the intersection of two manifolds, uniform normalised (UN) frames manifold and tight frames (TF) manifold. There exist some results that guarantee the existence of such tight frames, \textit{i.e.} non-emptiness of the intersection of two manifolds, for any arbitrary dimensions $n$ and $a$ \cite{Benedetto03}. This is indeed an important fact, as the optimisation with such a constraint is then \textit{always} feasible.

This constraint can be written as follows,

\begin{equation}\label{eq:C}
 \mathcal{C} = \{\Omg \in \R^{a \times n} \ : \ \Omg^\mathsmaller{T}\Omg = \mathbf{I},\ \forall i \ \|\omg_i\|_2 = c \},
\end{equation} 
and $\omg_i$ is the $i$th row of $\Omg$. Since $\mathcal{C}$ is not convex, (\ref{eq:l1l2stC}) may have many separate \textit{local optima}. Finding a global optima of (\ref{eq:l1l2stC}) is not easy and we often find a local optima using variational methods. Using variational analysis techniques, we can also check if an operator $\Omg$ is a local optimum. Details are presented in Appendix \ref{app:theory}. 

The exact recovery, using a variational analysis, in this setting, is essentially based on the following property,

\begin{definition}[Locally Identifiable]\label{def:localident}
 An operator $\Omg$ is locally identifiable from some (possibly noisy) training cosparse signals $\Y$, if it is a local minimum of an optimisation problem with a continuous objective and locally connected constraint.
\end{definition}
This definition of local identifiability is a generalisation of the (global) identifiability as here we can not find the global solution. Such a property has been previously investigated in the context of dictionary learning in \cite{Gribonval10} and \cite{Geng11}. 

\subsection{Extension of UNTF Constraints}\label{sec:extraconstraint}
In some situations, we may not necessarily want to require that $\Omg$ form a frame for the whole of $\R^n$. The inspiration for such a case comes from the finite difference operators which is closely tied to the popular TV-norm. When we model cartoon-like piece-wise constant images, an analysis operator $\Omg$ need only to detect transitions in neighboring pixel values. Therefore, all the rows of $\Omg$ can be orthogonal to the DC component.

We can facilitate learning similar rank-deficient operators by extending the UNTF constraints.
Here, we know that $\Omg \perp \mathcal{N}$, where $\mathcal{N}$ is the known null-space. We can modify (\ref{eq:C}) to construct the following constraint,

\begin{equation}\label{eq:CNull}
 \mathcal{C}_\mathcal{N} = \{\Omg \in \R^{a \times n} \ : \ \Omg^\mathsmaller{T}\Omg = \mathcal{P}_\mathsmaller{\mathcal{N}^\perp},\ \forall i \ \|\omg_i\|_2 = c \},
\end{equation} 
where $\mathcal{P}_\mathsmaller{\mathcal{N}^\perp}$ is the orthogonal projection operator onto the span of $\mathcal{N}^\perp$.

In the following, we introduce a practical algorithm to find a suboptimal solution of (\ref{eq:l1l2stC}), constrained to $\mathcal{C}$ or $\mathcal{C}_\mathcal{N}$, using a projected subgradient based algorithm.

\section{Constrained Analysis Operator Learning Algorithm}\label{sec:algorithm}

Cosparse signals, by definition, have a  large number of zero elements in the analysis space. In this framework, a convex formulation for recovering a signal $\x$ from its (noisy) observation is to minimise the following convex program,

\begin{equation} \label{eq:nl1}
\min_{\x} \|\Omg \x\|_1 + \frac{\lambda}{2}  \|\y - \x\|_{2}^2,
\end{equation}
where $\Omg$ is a fixed analysis operator. As (\ref{eq:nl1}) is a convex problem, many algorithms have been proposed to solve this program very efficiently (see \cite{Afonso09},\cite{Becker09} and \cite{Becker11} as some examples of the latest methods). These methods can easily be extended to the matrix form to find $\X$ given $\Y$. Unfortunately, the AOL problem \eqref{eq:l1l2stC}, due to the freedom to also change $\Omg \in \mathcal{C}$ in this formulation, is very difficult.


This difficulty is similar to what we face in the \emph{dictionary learning} for synthesis sparse approximation. In the dictionary learning problem, we have also a joint optimisation problem (\ref{eq:dl}) that, in a simple setting, can be minimised in an alternating optimisation approach \cite{Yaghoobi09} as follows,


\begin{algorithmic}[0]
\STATE \textbf{\textit{Dictionary Learning:}}
\STATE \textbf{initialisation:} $\D^{[0]}$, $\V^{[0]} = \mathbf{0}$, $i = 0$,
\WHILE{not converged}
\STATE $\V^{[i+1]} = \argmin_{\V} \| \V \|_1  + \frac{\lambda}{2} \|\X - \D^{[i]} \V \|_{F}^2$,
\STATE $\D^{[i+1]} = \argmin_{\D \in \mathcal{D}} \| \X - \D \V^{[i+1]}\|_{F}^2$
\STATE $i = i+1$
 \ENDWHILE .
\end{algorithmic}


An alternating minimisation technique can also be used for the AOL problem to seek a local minimum. In this framework we first keep $\X$ fixed and update the operator $\Omg$. We then update the cosparse approximations of the signals while keeping the operator fixed. Such an alternating update continues while the new pair $(\X, \Omg)$ is very similar to the previous  $(\X, \Omg)$ or is repeated for a predetermined number of iterations. A pseudocode for such an alternating minimisation AOL is presented in Algorithm \ref{alg:aola}, Analysis Operator Learning Algorithm (AOLA). Each subproblem, \textit{i.e} line \ref{eq:OmgUpdate} or line \ref{eq:XUpdate}, of AOLA can be solved separately based upon a single parameter. Although the problem in line \ref{eq:XUpdate} is convex and it can thus be solved in a polynomial time, the problem in line \ref{eq:OmgUpdate} is \emph{not} a convex problem due to the UNTF constraint $\mathcal{C}$.

Note that when the cosparse matrix $\X$ is given, \textit{i.e.} in a noiseless scenario $\sigma = 0$, the algorithm only has the operator update step, which we need to repeat until convergence. This step is also called noiseless AOL\cite{Yaghoobi11}.

\begin{algorithm}[t]
 \caption{Analysis Operator Learning Algorithm (AOLA)} \label{alg:aola}
\begin{algorithmic}[1]
\STATE \textbf{initialisation:} $\Omg^{[0]}$, $\X^{[0]} = \Y$, $i = 0$,
\WHILE{not converged}
\STATE $\Omg^{[i+1]} = \argmin_{\Omg \in \mathcal{C}} \|\Omg \X^{[i]}\|_1$, \label{eq:OmgUpdate}
\STATE $\X^{[i+1]} = \argmin_{\X} \|\Omg^{[i+1]}\X\|_1 + \frac{\lambda}{2} \|\Y - \X \|_{F}^2$ \label{eq:XUpdate}
\STATE $i = i+1$
 \ENDWHILE .
\end{algorithmic}
\end{algorithm}

\subsection{Analysis Operator Update} \label{sec:AOU}

We use a projected subgradient type algorithm to solve the operator update subproblem in line \ref{eq:OmgUpdate} of the AOLA. The subgradient of the objective is $\partial f(\Omg) = \esgn(\Omg \X^{[i]}) \mathbf{X}^{{[i]}^\mathsmaller{T}}$, where $\esgn$ is an extended set-valued sign function defined as follows,

\begin{equation*}
 \begin{split}
 \{\esgn(\mathbf{A})\}_\mathsmaller{i\,j} =& \esgn(\mathbf{A}_{i\,j}) \\ 	
 \esgn(\mathbf{a}) =& \begin{cases}
	                       	1 & \mathbf{a} > 0,\\
				[-1, 1] & \mathbf{a} = 0,\\
				-1 & \mathbf{a} < 0.
	                      \end{cases}
\end{split} 
\end{equation*}
In the projected subgradient methods, we have to choose a value in the set of subgradients. We randomly choose a value in $[-1,\, 1]$, when the corresponding element is zero.

After the subgradient descent step, the modified analysis operator is no longer UNTF and needs to be projected onto the UNTF set. Unfortunately, to the authors' knowledge, there is no \emph{analytical} method to find the projection of a point onto this set. Many attempts have been done to find such an (approximate) projection, see for example \cite{Tropp05} for an alternating projections and \cite{Bodmann10} for an ordinary differential equations based method. We rely on an alternating projections approach.
 
Projection of an operator with non-zero rows, onto the space of fixed row norm frames is easy and can be done by scaling each row to have norm $c$. We use $\mathcal{P}_{UN}$ to denote this projection.
If a row is zero, we set the row to a normalised random vector. This means that $\mathcal{P}_{UN}$ is not uniquely defined. This is due to the fact that the set of uniformly normalised vectors is not convex.
The projection can be found by,
\begin{equation*}
 \mathcal{P}_{UN}\{\Omg\} = [\mathcal{P}_{UN} \{\omg_i\}]_i ,
\end{equation*}
\begin{equation*}
 \mathcal{P}_{UN} \{\omg\}:= \begin{cases}
                              		   \frac{\omg}{\|\omg\|_\mathsmaller{2}} & \|\omg\|_\mathsmaller{2} \ne 0\\
					   \nu & \operatorname{otherwise},\\
                    			\end{cases}
\end{equation*}
where $\nu$ is a random vector on the unit sphere. 

Projection of a full rank matrix onto the tight frame manifold is possible by calculating a singular value decomposition of the linear operator \cite{Tropp05}. Let $\Omg \in \mathbb{R}^\mathsmaller{a \times n}$ be the given point and $\Omg = \mathbf{U} \: \Sigma \: \mathbf{V}^\mathsmaller{T}$ be a singular value decomposition of $\Omg$ and $\mathbf{I}_\mathsmaller{a \times n}$ be a diagonal matrix with identity on the main diagonal. The projection of $\Omg$ can be found using,
\begin{equation*}
\mathcal{P}_{TF}\{\Omg\} =  \mathbf{U} \: \mathbf{I}_\mathsmaller{a \times n} \: \mathbf{V}^\mathsmaller{T}.
\end{equation*}

Note that, although there is no guarantee to converge to a UNTF using this method, this technique practically works very well \cite{Tropp05}. As the projected subgradient continuously changes the current point, which needs to be projected onto UNTF's, we only use a single pair of projections at each iteration of the algorithm, to reduce the complexity of the algorithm, where the algorithm now asymptotically converges to a UNTF fixed point.
To guarantee a uniform reduction of the objective $f(\Omg)$, we can use a simple line search technique to adaptively reduce the stepsize. It indeed prevents any large update, which increases $f$\footnote{The stability of algorithm is guaranteed in a Lyapunov sense, \textit{i.e.} the operator $\Omg_\mathsmaller{[k]}$ is enforced to remain bounded, when $k \rightarrow \infty$.} . A pseudocode of this algorithm is presented in Algorithm \ref{alg:pcdl}, where $K_\mathsmaller{max}$ is the maximum number of iterations.

For the constraint set $\mathcal{C}_\mathcal{N}$ mentioned in Section \ref{sec:extraconstraint},
we present a simple modification to the previous algorithm in order to constrain the operator to have a specific null-space. 
Here, we need to find a method to (approximately) project an operator onto $\mathcal{C}_\mathcal{N}$. Following the alternating projection technique, which was used earlier to project onto $\mathcal{C}$, we only need to find the projection onto the set $\{\Omg \in \R^{a \times n} \ : \ \Omg^\mathsmaller{T}\Omg = \mathcal{P}_\mathsmaller{\mathcal{N}^\perp}\}$, as we already know how to project onto UN, \textit{i.e.} $\mathcal{P}_\mathsmaller{UN}$. To project an $\Omg$ onto the set of TF's in $\mathcal{N}^\perp$, we need to compute the singular value decomposition of $\Omg$, projected into the orthogonal complement space of 
$\mathcal{N}$. The projection onto $\mathcal{N}^\perp$, $\mathcal{P}_\mathsmaller{\mathcal{N}^\perp}$, can be found using an arbitrary orthonormal basis for $\mathcal{N}$ and simply subtracting the projection onto this basis, from the original $\Omg$. We can then rewrite the decomposition as $\mathcal{P}_\mathsmaller{\mathcal{N}^\perp}\{ \Omg\} = \U \: \Sigma \: \V^\mathsmaller{T}$. 
If the dimension of $\mathcal{N}$ is $r$, only $n-r$ singular values of $\mathcal{P}_\mathsmaller{\mathcal{N}^\perp}\{ \Omg\}$ can be non-zero. We find the constrained projection as follows,
\begin{equation*}
\mathcal{P}_{TF{\perp}\mathcal{N}}\{\mathbf{\Omg}\} =  \mathbf{U} \: \mathbf{Ir}_\mathsmaller{a \times n} \: \mathbf{V}^\mathsmaller{T},
\end{equation*}
where matrix $\mathbf{Ir}_\mathsmaller{a \times n}$ is a diagonal matrix in $\R^{a \times n}$, with identity values on the first $n-r$ diagonal positions, while the last $r$ elements are set to zero.

We therefore need to alternatingly use $\mathcal{P}_{TF{\perp}\mathcal{N}}$ and $\mathcal{P}_{UN}$ to find a point in the intersection of these two constraint sets. In Algorithm \ref{alg:pcdl}, we only need to replace $\mathcal{P}_{TF}$ with $\mathcal{P}_{TF{\perp}\mathcal{N}}$ to consider the new constraint $\mathcal{C}_\mathcal{N}$.

\begin{algorithm}[t]
 \caption{Projected Subgradient Based Analysis Operator Update} \label{alg:pcdl}
\textbf{Input:} $\X^{[i]}$, $K_\mathsmaller{\max}$, $\Omg^{[i]}$, $\eta$, $\epsilon \ll 1$, $\rho < 1$,
\begin{algorithmic}[0]
 \STATE \textbf{initialisation:} $k = 1$,  $\Omg_\mathsmaller{[0]} = \mathbf{0}$, $\Omg_\mathsmaller{[1]} = \Omg^{[i]}$
\WHILE{$\epsilon \le \|\Omg_\mathsmaller{[k]} - \Omg_\mathsmaller{[k-1]}\|_\mathsmaller{F} $ and $k \le K_{\max}$}
\STATE $\Omg_\mathsmaller{G} \in \partial f(\Omg_\mathsmaller{[k]}) = \esgn(\Omg_\mathsmaller{[k]}\X^{[i]}) \mathbf{X}^{{[i]}^\mathsmaller{T}}$
\STATE $\Omg_\mathsmaller{[k+1]} = \mathcal{P}_{UN}\left\{ \mathcal{P}_{TF}\left\{\Omg_\mathsmaller{[k]} - \eta \ \Omg_\mathsmaller{G} \right\}\right\}$
\WHILE{$f(\Omg_\mathsmaller{[k+1]}) > f(\Omg_\mathsmaller{[k]})$}
\STATE $\eta = \rho . \eta$
\STATE $\Omg_\mathsmaller{[k+1]} = \mathcal{P}_{UN}\left\{ \mathcal{P}_{TF}\left\{\Omg_\mathsmaller{[k]} - \eta \ \Omg_\mathsmaller{G} \right\}\right\}$
 \ENDWHILE
\STATE $k = k+1$
 \ENDWHILE
\STATE \textbf{output:} $\Omg^{[i+1]} = \Omg_\mathsmaller{[k-1]}$
\end{algorithmic}
\end{algorithm}

\subsection{Cosparse Signal Update} \label{sec:CSU}

When $\Omg$ is known or given by line \ref{eq:OmgUpdate} of AOLA, a convex program based on $\X$ needs to be solved. Although this program is a matrix valued optimisation problem, it can easily be decoupled to some vector valued subproblems based upon the columns of $\X$. One approach is to individually solve each subproblem. Here we present an efficient approach to solve this program in the original matrix-valued form, despite the challenging nature of it, due to its large size.

The main difficulties are, a) $\ell_1$ is not differentiable and b) $\mathbf{\Omega}$ inside the $\ell_1$ penalty, does not allow us to use conventional methods for solving similar $\ell_1$ penalised problems. We here use the Douglas-Rachford Splitting (DRS) technique to efficiently solve the cosparse signal approximation of the AOLA. It is also called the alternating direction method of multipliers (ADMM) in this setting, see \cite{Eckstein92} for a brief overview. This technique has indeed been used
for the Total Variation (TV) and analysis sparse approximations in \cite{Afonso09}\cite{Goldstein09}\footnote{\cite{Goldstein09} derives the formulation by incorporating the Bregman distance. However, it has been shown that the new method, called Alternating Bregman Splitting method, is the same as DRS, applied to the dual problems \cite{Setzer09}.}. We here only present a simple version of the DRS technique, carefully tailored for this problem. 
The problem is a constrained convex program with two parameters $\Z  = \Omg^{[i+1]} \X$ and $\X$ as follows,

\begin{equation}\label{eq:split}
\min_{\X,\Z} {\|\Z\|_1 + \frac{\lambda}{2}  \|\Y - \X\|_{F}^2} \ \subto \ \Z = \Omg^{[i+1]} \X.
\end{equation}
 
The Augmented Lagrangian (AL) \cite{Rockafellar74} method is applied to solve (\ref{eq:split}). In the Lagrangian multiplier method we use the dual parameter $\B \in \R^{a \times l}$ and add a penalty term, $\left< \B ,\Omg^{[i+1]} \X - \Z \right>$. In the AL, we also add an extra quadratic penalty related to the constraint and derive the new objective $g(\X,\Z,\B)$ as follows, 
\begin{equation*}
\begin{split}
g(\X,\Z,\B) =& \|\Z\|_1 + \frac{\lambda}{2}  \|\Y - \X\|_{F}^2 + \gamma \left< \B , \Omg^{[i+1]} \X - \Z \right> \\ 
& \quad+ \frac{\gamma}{2} \|\Omg^{[i+1]} \X - \Z \|_{F}^2 \\
=& \|\Z\|_1 + \frac{\lambda}{2}  \|\Y - \X\|_{F}^2 + \frac{\gamma}{2} \|\B + \Omg^{[i+1]} \X - \Z\|_{F}^2 
\\& \quad- \frac{\gamma}{2}\|\B\|_{F}^2 ,
\end{split}
\end{equation*}
where $0 < \gamma \in \R^+$ is a constant parameter. According to the duality property, the solution of $\max_\mathsmaller{\B} \min_{\mathsmaller{\X},\mathsmaller{\Z}}
g(\X,\Z,\B)$ coincides with the solution of (\ref{eq:split}). Using the DRS method, we iteratively optimise a convex/concave surrogate objective $g_s(\X,\Z,\B,\B_{[k]}) =
g(\X,\Z,\B) - \|\B - \B_{[k]}\|_{F}^2$, where $\B_{[k]}$ is the current estimation of $\B$. The fixed points of the iterative updates of $g_s(\X,\Z,\B,\B_{[k]})$
are the same as $g(\X,\Z,\B)$, as the extra term $\|\B - \B_{[k]}\|_{F}^2$ vanishes in any fixed points. $g_s(\X,\Z,\B,\B_{[k]})$ is convex with respect to $\Z$ and $\X$ and concave with respect to $\B$. We can thus iteratively update each of the parameters, while keeping the rest fixed, see Algorithm \ref{alg:csua}.
In this algorithm, $\mathcal{S}_\alpha$, with an $\alpha > 0$ , is the entrywise
soft-threshold operator defined by $\mathcal{S}_\alpha(\beta) = \beta - \alpha\sgn(\beta) \operatorname{if} |\beta| \ge \alpha \operatorname{and} \; 0 \: \operatorname{otherwise}$ \cite{Donoho94}. Note that the update formula for $\X$ in Algorithm \ref{alg:csua}, line \ref{eq:XinUpdate}, involves a matrix inversion, which is computationally expensive. As $\mathbf{\Omega}$ is a tight frame here, the matrix inversion is significantly simplified using the fact that $\Omg^\mathsmaller{T}\Omg$ is identity. In this case, the operator $(\lambda \mathbf{I} + \gamma \ \mathbf{\Omega}^\mathsmaller{T}\mathbf{\Omega})^{-1}$ is simply a scaling with $\frac{1}{\lambda + \gamma}$.

We iterate Algorithm \ref{alg:csua} for a number of iterations $K_\mathsmaller{max}$ or until the
parameters cease to change significantly. Although the convergence of
the iterative updates of this algorithm can separately be investigated,
it can also be deduced using the fact that it is a particular case of
DRS, which converges under mild conditions \cite{Eckstein92}. 

\begin{algorithm}[t]
 \caption{DRS Based Cosparse Signal Update} \label{alg:csua}
\textbf{Input:} $i$, $\Omg^{[i+1]}$, $K_{\max}$, $\X^{[i]}$, $\gamma$, $\lambda$, $\epsilon \ll 1$, $\lambda,\gamma$
\begin{algorithmic}[1] 
 \STATE \textbf{initialisation:} $k = 1$,  $\X_{[k]} = \X^{[i]}$, $\Omg = \Omg^{[i+1]}$, $\B_{[k]} = \mathbf{0}$, \\$\Z_{[k]} = \Omg \X_{[k]}$
\WHILE{$\epsilon \le \|\X_{[k]} - \X_{[k-1]}\|_\mathsmaller{F} $ and $k \le K_{\max}$}
\STATE ${\X}_{[k+1]} = (\lambda \mathbf{I} + \gamma \Omg^\mathsmaller{T}\Omg)^\mathsmaller{-1} (\lambda  \Y + \gamma \Omg^\mathsmaller{T}  (\Z_{[k]} -  \B_{[k]}))$ \label{eq:XinUpdate}
\STATE $\Z_{[k+1]} = \mathcal{S}_\mathsmaller{\frac{1}{\gamma}} \left\{\Omg \X_{[k+1]} + \B_{[k]} \right\}$
\STATE $\B_{[k+1]} = \B^{[k]}  + (\Omg \X_{[k+1]} - \Z_{[k+1]})$
\STATE $k = k+1$
 \ENDWHILE
\STATE \textbf{output:} $\X^{[i+1]} = \X_{[k-1]}$
\end{algorithmic}
\end{algorithm}


\section{Simulations}\label{sec:simulations}

\begin{figure}[t]
\centering
\centerline{\epsfig{figure=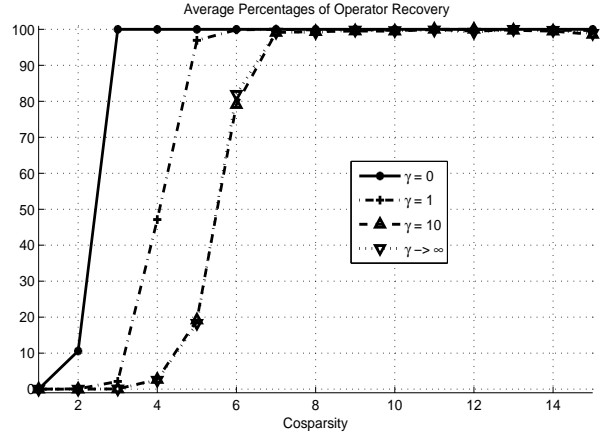,width=9.5cm, height=6cm}}
\vspace{-3mm}
\caption{The average percentage of operator recovery for different $\gamma$'s, where $\gamma$ controls how far is the starting point $\Omg_\mathsmaller{in}$ form $\Omg_\mathsmaller{0}$. The $x$-axis presents the cosparsity of the synthetic data.}
\label{recLambda}
\end{figure}
%

We present two categories of simulation results in this section. The first part is concerned with empirical demonstration of (almost) exact recovery of the reference analysis operators. An astute reader might ask: why should we care about exact recovery at all? Is it not enough to show that learned operators are good? This is, of course, true. However, when the signals are generated to be cosparse with respect to a reference operator or they are known to be cosparse with respect to a known analysis operator, the recovery of those operators is a good demonstration of the effectiveness of our method. Going further, one may imagine situations where the rows of the reference analysis operators explain certain properties/dynamics of the signals; for example, finite differences for piecewise constant images detect edges.
Obviously, the recovery of reference operators in these contexts is significant.

In the second part, we are interested in demonstrating the effectiveness of learned operators in a practical task, namely, image denoising. 

\subsection{Exact recovery of analysis operators}
For the first experiment, the reference operators and the cosparse signal data set were generated as follows:
A random operator $\Omg_{\mathsmaller{0^-}} \in \mathbb{R}^\mathsmaller{24 \times 16}$ was generated using \textit{i.i.d.} zero mean, unit variance normal random variables\footnote{$\Omg_{\mathsmaller{0^-}}$ is not necessarily a UNTF and needs to be projected onto the set of UNTF's.}. The reference analysis operator $\Omg_\mathsmaller{0}$ is made by alternatingly projecting $\Omg_\mathsmaller{0^-}$ onto the sets of $UN$'s and $TF$'s. A set of training samples was generated, with different cosparsities, by randomly selecting a normal vector in the orthogonal complement space of a randomly selected $q$ rows of $\Omg_\mathsmaller{0}$. Such a vector $\mathbf{y}_i$ has (at least) $q$ zero components in $\Omg_\mathsmaller{0} \mathbf{y}_i$, and it is, thus, $q$ cosparse. 

To initialise the proposed algorithm, we used a linear model to generate the initial $\Omg$ by combining the reference operator $\Omg_\mathsmaller{0}$ and a normalised random matrix $\mathbf{N}$, \textit{i.e.} $\Omg_{\mathsmaller{in}} = \Omg_\mathsmaller{0} + \gamma \, \mathbf{N}$, and then alternatingly projecting onto $UN$ and $TF$. It is clear that when $\gamma$ is zero, we actually initialise $\Omg$ with the reference model $\Omg_\mathsmaller{0}$ and when $\gamma \rightarrow \infty$, the initial $\Omg_\mathsmaller{in}$ will be random. 


First, we chose a set of size $l = 768$ of such training corpus and used the noiseless formulation (\ref{eq:l1stl2C}), where $\sigma = 0$. The AOL algorithm in this setting is just the projected subgradient base algorithm, Algorithm \ref{alg:pcdl}, which was iterated $50000$ times. To check the local optimality of the operator and the size of basin of attraction, we chose $\gamma = 0,\, 1, 10$ and $\infty$. The average percentage of operator (rows) recovery, \textit{i.e.} the maximum $\ell_2$ distance of any recovered row and the closest row of the reference operator, is not more than $\sqrt{.001}$, for different cosparsity and 100 trials, are plotted in Figure \ref{recLambda}. We practically observe that the operator is the local optimum even when the cosparsity of the signal is as low as 3. We also see that the average recovery reduces by starting from a point far from the the actual reference operator or a random operator, which is the case $\gamma \rightarrow \infty$.

\begin{figure}[t]
\vspace{-2mm}
\centering
\centerline{\epsfig{figure=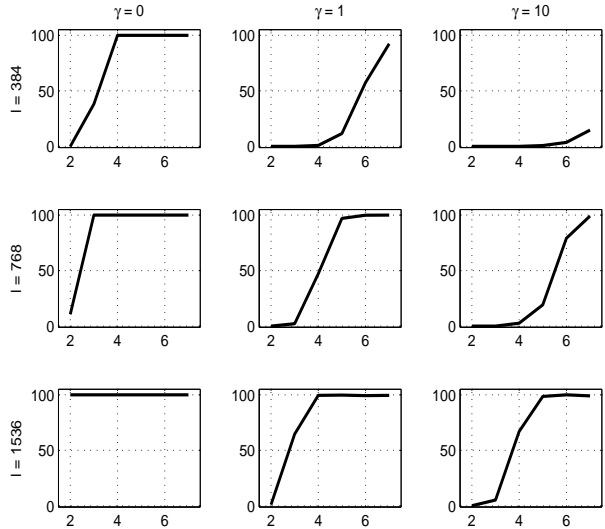,width=9.5cm, height=8cm}}
\vspace{-5mm}
\caption{The average percentage of operator recovery with different training set population size $l$. The $x$-axis presents the cosparsity of the signals. }
\label{recL}
\end{figure}

\begin{figure}[t]
\vspace{-2mm}
\centering
\centerline{\epsfig{figure=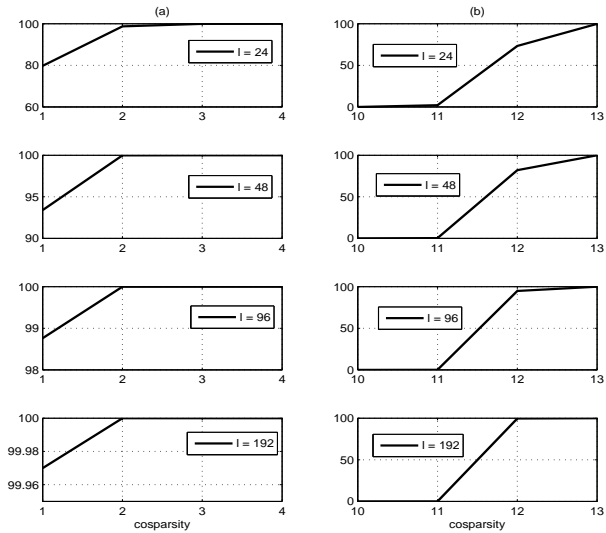,width=9.5cm, height=8cm}}
\vspace{-5mm}
\caption{The local-identifiability check by randomly generating vectors in the admissible set of Lemma \ref{lem:iffnoiseless} of Appendix \ref{app:theory} and checking the credibility of the inequality in (a) Lemma \ref{lem:iffnoiseless}- Equation (\ref{eq:iffnoiseless}); and (b) Theorem \ref{theo:identifiability}- Equation (\ref{eq:identifiability}). }
\label{identifiability}
\end{figure}

We now investigate the role of $l$ on the average operator recovery by some simulations. We kept the previous experiment settings and repeated the simulations for two new training sets, with populations of $l = 384$ and $1536$, which are $1/2$ and $2$ times of the population in the previous experiments. We show the average operator recovery for $\gamma = 0, \, 1, \, 10$ in Figure \ref{recL}. The simulation results show not only that $\Omg_\mathsmaller{0}$ can be locally identified with even less cosparse signals, smaller $q$, but the basin of attraction is also extended and now the reference operator can be recovered by starting from a distant initial point, even using 2 cosparse signals.

In the next two experiments, we show that if the cosparsity is low, \textit{i.e.} small $q$, then the analysis operator cannot be ``recovered'': it is not a solution of the proposed optimization problem (\ref{eq:l1stl2C}). One way to demonstrate it would consist in solving the large scale convex optimisation program (\ref{eq:noiselesslinerized})
expressed in Proposition \ref{pro:localiden} of the Appendix \ref{app:theory}.
Alternatively, we can use Lemma \ref{lem:iffnoiseless} (respectively Theorem \ref{theo:identifiability}) of the appendix, to possibly find a matrix $\Delta_z$, which violates the conditions. Here, we randomly generated 1000 $\Delta_z$ in $\mathcal{T}_{\mathcal{C}_s}$, and checked whether the inequality (\ref{eq:iffnoiseless}) (respectively (\ref{eq:identifiability})) was satisfied. We repeated this process for 10 different pseudo-randomly generated $\Omg$ and plotted the percentages of $\Delta_z$ satisfying the inequality in Figure \ref{identifiability}, respectively in the left column (a) and the right column (b). We have also mentioned the training size, \textit{i.e.} $l$ on each row of this figure. From left column, we can see that with 1-cosparse signals, there are operators which are not local minimum of the program (\ref{eq:l1stl2C}) ($\sigma = 0$). We can also see that the relaxation used to derive Theorem \ref{theo:identifiability}, makes the result of this Theorem very conservative, \textit{i.e.} there are many cases which do not satisfy this theorem, but they still can be locally-identifiable.

%

\begin{figure}[t]
\centering
\epsfig{figure=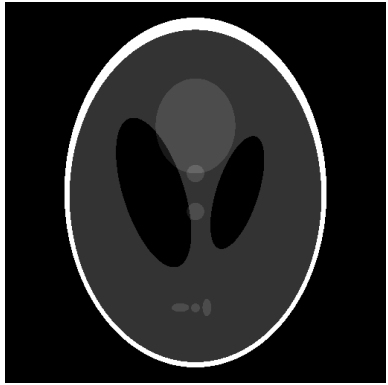, width=6.5cm }
\vspace{-5mm}
\caption{A 512 by 512 Shepp-Logan phantom image which was used as the training image. }
\label{sheppLogganPhantom}
\end{figure}

\begin{figure}[t]
\centering
\vspace{-2mm}
\centerline{\epsfig{figure=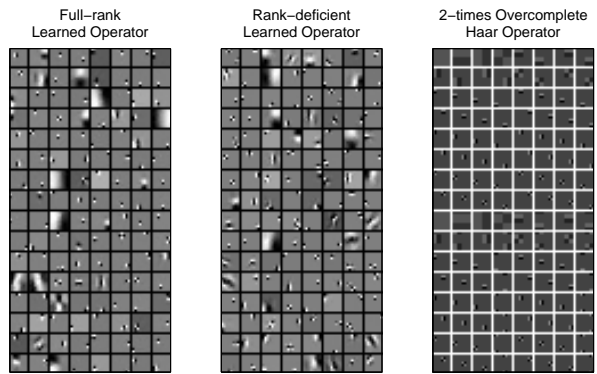,width=10cm}}
\vspace{-5mm}
\caption{The rows of the full-rank learned operator (left panel), the rank-deficient learned operator (middle panel) and a two-times oevrcomplete Haar operator (right panel), in the form of 8 by 8 blocks.}
\label{LearnedOperatorRows}
\end{figure}

\begin{figure}[t]
\centering
\vspace{-2mm}
\centerline{\epsfig{figure=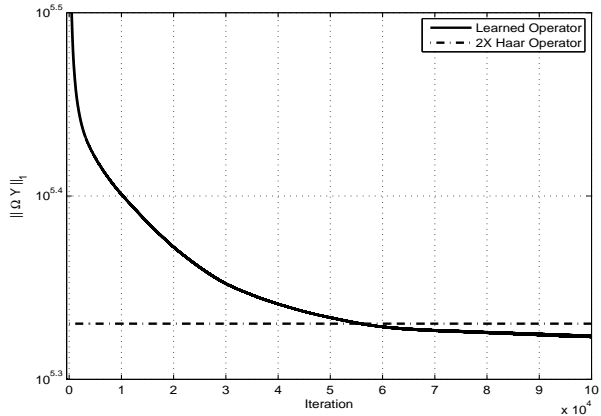,width=9.0cm, height=6cm}}
\vspace{-5mm}
\caption{The value of objective in (\ref{eq:noislessoptformulation}), in comparison with a base line, \textit{i.e.} the objective value with a two times overcomplete Haar wavelet operator.}
\label{costComp}
\end{figure}



For the last experiment, we chose the Shepp Logan Phantom, a well-known piecewise constant image in the Magnetic Resonance Imaging (MRI) community (see Figure \ref{sheppLogganPhantom}). We used $l = 16384$ blocks of size 8 by 8, randomly selected from the image, such that, except one DC training sample, all the training images contain some edges. We learned a $128 \times 64$ operator for the vectorised training image patches, by initialising the (noise-less) AOL algorithm with a pseudo-random UNTF operator. We used two different constraints $\mathcal{C}$ and $\mathcal{C}_\mathsmaller{\mathcal{N}}$ for the learning operator $\Omg$, \textit{i.e.} (\ref{eq:C}) and (\ref{eq:CNull}), where the null space was selected to be constant images. We chose the second constraint to see if we would find a different operator by enforcing a similar null-space to the Finite Difference (FD) operator.

The learned operators are shown in Figure \ref{LearnedOperatorRows}. We found operators which have many FD type elements. The operator which we learned using $\mathcal{C}$ in (\ref{eq:CNull}) seems to have more FD type elements. 
We also compare the level of objective value for the learned operator, using $\mathcal{C}$ in (\ref{eq:C}), and a reference operator. As a two dimensional FD operator is not a UNTF, we chose a two-times overcomplete Haar wavelet---see the right panel of Figure \ref{LearnedOperatorRows}---which has some similarities with the FD, in the fine scale. The objective of (\ref{eq:l1l2stC}) for the learned operator in different steps of training and the Haar based operator are shown in Figure \ref{costComp}. This shows that the learned operator finally outperforms the baseline operator in the objective value. Note that if we initialise the algorithm with the Haar based operator, the AOL algorithm does not provide a new operator. This empirically shows that the reference operator is a local optimum for the 
problem (\ref{eq:l1l2stC}), where $\lambda \rightarrow \infty$. This fact can be investigated using the analysis provided in Appendix \ref{app:theory}, \textit{i.e.} randomly generating admissible vectors and checking (\ref{eq:iffnoiseless}) or (\ref{eq:identifiability}).

\begin{figure}[t]
\centering
\vspace{-2mm}
\centerline{\epsfig{figure=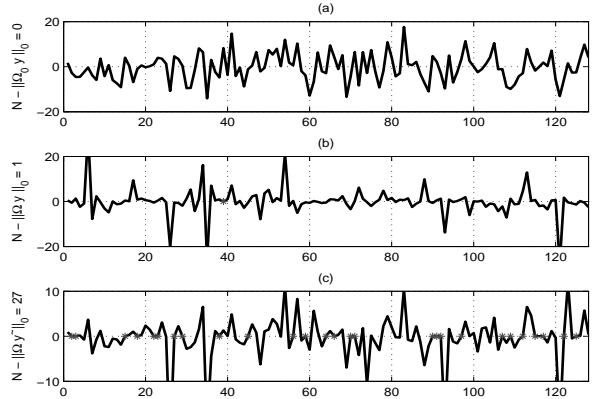,width=9.0cm, height=6cm}}
\vspace{-5mm}
\caption{Signals in the analysis space $\R^{N}$, $N=128$. The
coefficients with almost zero magnitude, \textit{i.e.} less than 0.01, are
indicated with stars. The cosparsity in each case is: (a)
$p=0$, (b) $p=1$, and (c) $p=27$.}
\label{SingleCosparsity}
\end{figure}

\begin{figure}[t]
\centering
\vspace{-2mm}
\centerline{\epsfig{figure=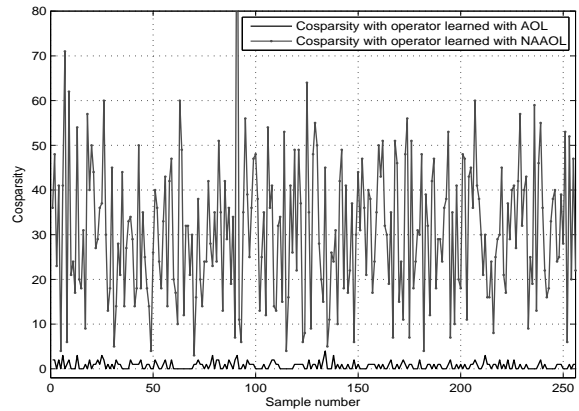,width=9cm, height=6cm}}
\vspace{-5mm}
\caption{The cosparsities of $\y$ (bottom plot) and $\widehat{\y}$ (top plot) respectively with the operators learned with (NL)AOL and NAAOL.}
\label{CosparsityComp}
\end{figure}


\subsection{Image denoising with learned analysis operators}
For the next experiments we used  a set of face images which are centred and cropped \cite{Lee05}. Such images can  be modelled as approximately piecewise smooth signals. A pseudo-random admissible $\Omg^{[0]} \in \R^{128 \times 64}$ has been used as an initial analysis operator and a training set of size $l = 16384$ of $8\times8$ image patches from 13 different faces, after normalised to have mean zero, have been used to learn the operators ($a=128$).

We applied both of the noiseless AOL Algorithm and the noise-aware AOL Algorithm to demonstrate how much the cosparsities of the training samples increase using the noise aware formulation. (NL)AOL algorithm was iterated $K_{\max} = 100000$ times and NAAOL algorithm iterated for $K_{\max} = 10$ iterations with $\lambda = \gamma = 0.5$, while the inner-loop, \textit{i.e.} Algorithm \ref{alg:pcdl}, was iterated 100000 times. 

A plot of the analysis coefficients for a selected $\y$ along with its corresponding cosparsities, with three different $\mathbf{\Omega}$'s, are presented in Figure \ref{SingleCosparsity}. The initial operator $\Omg_0 = \Omg^{[0]}$ has been applied to $\y$ in $(a)$. Not surprisingly, the signal is not cosparse with this arbitrary
operator ($q=0$). In $(b)$, the same plot is drawn using the learned operator with (NL)AOL. Although some coefficients are small, most are not zero, and $q=1$.  In the last plot, we have shown the analysis coefficients for $\x$ using the learned operator with NAAOL. It is clear that the cosparsity has been increased significantly (from $q=1$ to $q=27$). We have further plotted the cosparsities of the first $256$ training samples $\y$'s using the learned operator found by (NL)AOL and corresponding approximations $\x$'s, which are found by NAAOL, in Figure \ref{CosparsityComp}. This figure also shows, the operator learning using the noise aware formulation (\ref{eq:l1l2stC}), where $\lambda$ is finite, results in much greater cosparsity. We show the learned operator found using NAAOL algorithm in Figure \ref{NoisyFaceOperator}. This experiment suggests that a harmonic type operator should perform better than FD based operators for such images. This is an interesting observation which should be explored in the future in more detail.

The aim of next experiment is to compare denoising performance of an image using a learned operator and a FD operator \cite{Rudin92} which is similar to the analysis operator in the TV denoising formulation and is shown in the right panel of Figure \ref{NoisyFaceOperator}). We keep the previous experiment settings. The learned operator and the FD operator can now be used to denoise a corrupted version of another face from the database, using (\ref{eq:nl1}). The original face is shown in Figure \ref{FaceDenoising} (a) and the noisy version, with additive \textit{i.i.d} Gaussian noise, is shown in (b), with PSNR = 26.8720 dB. Denoising was performed using two different regularisation settings: $(\lambda = \gamma ) = 0.3 , \; 0.1$.  The bottom two rows show the denoised images using the FD operator and the learned operator. We can visually conclude that the two operators successfully denoise the corrupted images with some slight differences. The results with the learned operators are smoother (this is 
mostly visible on a screen rather than a printed copy of the paper). The average PSNR's of the denoised images over 100 trials are presented in Table \ref{tab:psnr}. Although we get a marginally better average PSNR using the learned operator instead of FD when $\lambda = 0.3$, we get a noticeably better average PSNR, \textit{i.e.}, 1 dB, when $\lambda = 0.1$.

As the initial goal was to increase the cosparsities of signals, we have also shown the cosparsities of different patches of the selected face image (see Figure \ref{FaceCosparsity}). The horizontal axis presents the index number of the patches. To compare the cosparsity using these operators, we have plotted their differences and its average in the bottom plot. Positive values here demonstrate the cases when the learned operator is a better operator than the FD operator. The average, which is indeed positive, is plotted as a horizontal line. As a result, the learned operator here performs much better (15\% improvement) than the FD operator.

The synthesis framework has been used for different applications with some very promising results. In the last experiment, we have a comparative study between the two frameworks, for denoising face images. We chose the settings of the previous experiment and used the patch based learning. For the reference, we used a two times overcomplete DCT, which is a standard selection for the sparsity based denoising \cite{Elad06b}. For the synthesis sparse representation, we used OMP method, and for the analysis sparse recovery, we used a convex formulation similar to the previous experiment. The optimum value of $\lambda$ in the $\ell_1$ analysis was $0.04$. The OMP was running to achieve an approximation error equal to $1.15 \sigma$, where $\sigma$ is the standard deviation of the additive noise. This setting has been used in \cite{Elad06b}, for denoising with the learned synthesis dictionary using K-SVD method. We used another technique, which has also been used in the mentioned reference, to average out over the overlapping-blocks of images to reduce the blocking effect. The average PSNR's of the denoised face images over 5 trials, for the DCT dictionary/operator are shown in the first row of Table \ref{tab:psnrComp}.

This experiment demonstrates that the synthesis framework works better in the denoising of face images. We already have techniques to learn dictionaries/operators for each of these three cases. For the OMP and analysis based denoisings, we respectively chose K-SVD \cite{Aharon06} and the proposed CAOL methods. The Lagrange multiplier for the analysis based denoising was $\lambda = 0.09$, while we used the operator learned in the previous experiment. The average PSNR's of the denoised face images over 5 trials are presented in the second row of Table \ref{tab:psnrComp}. 

By comparing the synthesis denoising techniques, with the overcomplete DCT and K-SVD dictionaries, we find a slight improvement (+0.2 dB) in the PSNR of the denoised images, using the learned dictionary. This improvement is more significant in the analysis denoising framework, as we achieved more than 2.5 dB improvement in the PSNR of the denoised image. In comparison between two frameworks, although the PSNR of the denoised image using the learned operator is significantly improved, it is marginally behind the synthesis competitor. However, we found these results very promising, since this is only the beginning of the field ``denoising with the learned analysis operators''. More experiments are necessary to find a reason for such a behaviour, which we leave it for the future work.

\begin{figure}[t]
\centering
\centerline{\epsfig{figure=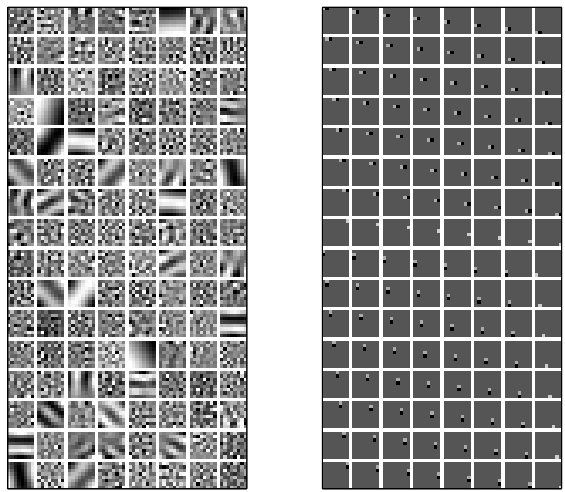,width=9.5cm}}
\vspace{-5mm}
\caption{The learned analysis operator in a noisy setting, $\lambda = 0.1$ (left panel) and the finite difference operator (right panel).}
\label{NoisyFaceOperator}
\end{figure}


\begin{figure}[t]
\centering
\centerline{\epsfig{figure=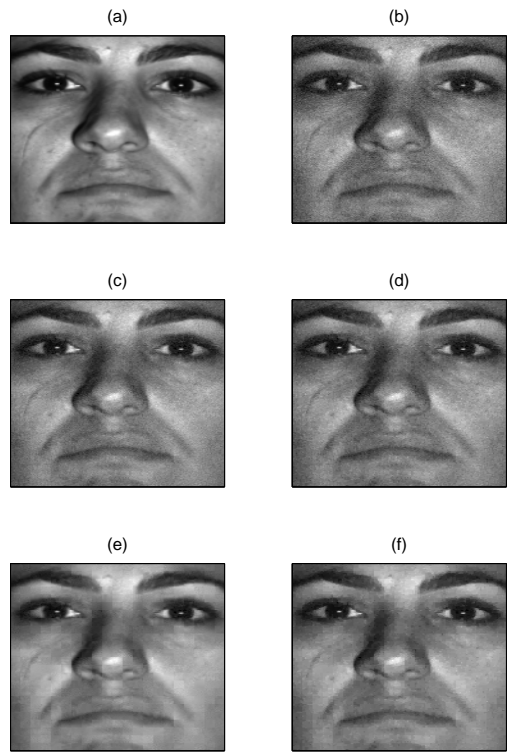,width=8.5cm}}
\vspace{-13mm}
\caption{Face image denoising. Top row: original face (left), noisy
  face (right). Denoising results using (\ref{eq:nl1}). Middle row:
  $\lambda = 0.3$ using the learned analysis operator (left) and  the finite difference operator 
  (right). Bottom row: same as middle row with $\lambda = 0.1$.}
\label{FaceDenoising}
\end{figure}

\begin{figure}[ht]
\centering
\centerline{\epsfig{figure=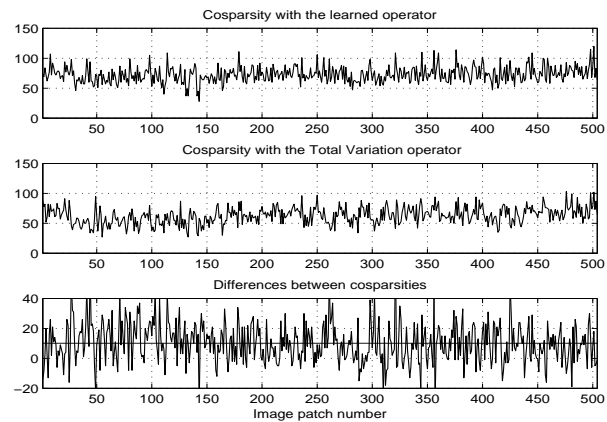,width=9.5cm, height=6cm}}
\vspace{-5mm}
\caption{Cosparsities of image patches (a)-(b) and a comparison (c).}
\label{FaceCosparsity}
\end{figure}

\begin{table}[t]
\caption{Average PSNR (dB) of the denoised face images (100 trials) using, a) FD and b) the learned operators, when the average PSNR of the given noisy images was 26.8720 dB}\label{tab:psnr}
\centering
\begin{tabular}{ l | c  c}
	&	$\lambda = 0.3$	&	$\lambda = 0.1$ \\
\hline
\\
Finite Difference	&	28.9643	&	31.5775 \\
Learned Operators		&	29.0254	&	32.5202 \\
\end{tabular} 
\end{table}

\begin{table}[t]
\caption{Average PSNR (dB) of the denoised face images (5 trials), when the average PSNR of the given noisy images was approximately 26.6 dB (varies slightly in different experiments)}\label{tab:psnrComp}
\centering
\begin{tabular}{| l  | c | c |}
\hline
	&		Synthesis (OMP)  &  Analysis ($\ell_1$)\\
\hline
DCT	&	34.63 & 	31.6  \\
\hline
Learned 	&	 34.87  &	 34.22 \\
\hline 
\end{tabular} 
\end{table}

\section{Summary and Conclusion}

In this paper, we presented a new concept for learning linear analysis operators, called constrained analysis operator learning. The need for a constraint in the learning process comes from the fact that we have various trivial solutions, which we want to avoid by selecting an appropriate constraint. A suitable constraint for this problem was introduced after briefly explaining why some of the canonical constraints in model learning problems are not sufficient for this task. Although there is no claim that the introduced constraint is the most suitable selection, we practically observed very good results with the introduced algorithm. 

In the simulation part, we showed some results to support our statements about the relevance of the constraint and the learning algorithm. We actually demonstrated that the learning framework can recover a synthetic analysis operator, when the signals are cosparse and enough training signals are given. By applying the algorithm on two different types of image classes, \textit{i.e.} piecewise constant and approximately piecewise smooth images, we learned operators which have respectively similarities with the finite difference and harmonic type operators. This observation emphasises the fact that we benefit from selecting the most appropriate analysis operator for image processing tasks. The Matlab code of proposed simulations will be available, as a part of the dictionary learning toolbox SMALLbox \cite{Damnjanovic10}, later.

The proposed constrained optimisation problem is a non-convex program, for which we can often find a local optimum using variational optimisation methods. In the appendix, we characterise the local optima of such programs. This can be useful to avoid initialising the algorithm with a local minima, and also for empirically checking when an operator can be a local minima. The latter emphasises the fact that we can recover such an operator, by starting the algorithm with a point in a close neighbourhood of the operator, which we do not know \textit{a priori}.

\appendices
\section{Theoretical Analysis of the CAOL}\label{app:theory}


If we rewrite the optimisation program (\ref{eq:l1l2stC}), using the UNTF admissible set, we find the following program,

\begin{equation}\label{eq:optformulation}
\begin{split}
 \min_{\mathsmaller{\Omg}, {\X}} \|\Omg \X\|_1 + \frac{\lambda}{2} \|\Y -  \X \|_{\mathsmaller{F}}^2 \ & \subto  \ \Omg^T \Omg = \I \\
			& \ \ \forall_i \ \|\omg_i\|_2 = c . 
\end{split}
\end{equation}

The variational analysis \cite{Rockafellar97} of the objective, near to a given pair $(\Omg_0,\X_0)$, provides the following proposition.

\begin{proposition}\label{prop:linearized}
A given pair $(\Omg_0,\X_0)$ is a local minimum for (\ref{eq:optformulation}) if and only if $\Delta = \mathbf{0}$ and $\Sigma = \mathbf{0}$ are the only solutions of the following \emph{convex} program,
\begin{equation}\label{eq:linearized}
\begin{split}
 \min_\mathsmaller{\Delta, \Sigma} &\overbrace{\| \Omg_0 \X_0 + \Delta \X_0 + \Omg_0 \Sigma\|_1 + \lambda \tr\{\Sigma^T (\X_0 - \Y)\}}^{\mathcal{L}_{\lambda}(\Delta,\Sigma)} \\ 
		    \subto \ & \Delta^T \Omg_0 + \Omg_{0}^T \Delta = \mathbf{0}  \\
			     & \forall i \ \left< \omg_{0_i}, \delta_i\right> = 0.
\end{split}
\end{equation}
\end{proposition}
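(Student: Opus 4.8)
The plan is to treat \eqref{eq:linearized} as the exact first-order model of the objective $F(\Omg,\X) := \|\Omg\X\|_1 + \frac{\lambda}{2}\|\Y-\X\|_F^2$ of \eqref{eq:optformulation} near $(\Omg_0,\X_0)$, and to show that local minimality is equivalent to the absence of a feasible first-order descent direction. First I would identify the admissible directions. Writing $\Omg = \Omg_0 + \Delta$ and retaining only first-order terms, the tight-frame constraint $\Omg^T\Omg = \I$ (using $\Omg_0^T\Omg_0=\I$) yields $\Delta^T\Omg_0 + \Omg_0^T\Delta = \mathbf{0}$, while the row-norm constraints $\|\omg_i\|_2 = c$ yield $\langle\omg_{0_i},\delta_i\rangle = 0$; since $\X$ is unconstrained, $\Sigma$ is free. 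These are exactly the linear constraints of \eqref{eq:linearized}, so the feasible set of the convex program is the tangent subspace of the UNTF manifold (smooth, as recalled in Section~\ref{sec:untf}) crossed with all of $\R^{n\times l}$. Because the manifold is smooth, every nearby feasible point is reached by a smooth curve $\Omg(t)$ with $\Omg(0)=\Omg_0$ and prescribed admissible tangent $\dot\Omg(0)=\Delta$, paired with $\X(t) = \X_0 + t\Sigma$.

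Next I would expand $F$ along such a curve. Since $\Omg(t)\X(t) = \Omg_0\X_0 + t(\Delta\X_0 + \Omg_0\Sigma) + O(t^2)$ and $\ell_1$ is Lipschitz, the $O(t^2)$ perturbation inside the norm costs only $O(t^2)$; expanding the quadratic term produces the linear contribution $\lambda t\,\tr\{\Sigma^T(\X_0-\Y)\}$ together with $\frac{\lambda}{2}t^2\|\Sigma\|_F^2$. Collecting terms gives
\begin{equation*}
F(\Omg(t),\X(t)) - F(\Omg_0,\X_0) = \big[\mathcal{L}_\lambda(t\Delta,t\Sigma) - \mathcal{L}_\lambda(\mathbf{0},\mathbf{0})\big] + O(t^2),
\end{equation*}
which is precisely the increment of the objective of \eqref{eq:linearized}. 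The crucial structural fact is that $\mathcal{L}_\lambda$ is \emph{polyhedral} (an $\ell_1$ norm of an affine map of $(\Delta,\Sigma)$, plus a linear term): hence in a neighbourhood of the origin it coincides with $\mathcal{L}_\lambda(\mathbf{0},\mathbf{0})$ plus its one-sided directional-derivative function $\phi(\Delta,\Sigma):=\mathcal{L}_\lambda'(\mathbf{0};(\Delta,\Sigma))$, a finite sublinear function on the feasible subspace.

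With this in hand the equivalence follows. If $(\mathbf{0},\mathbf{0})$ is the unique minimiser of \eqref{eq:linearized}, then $\phi > 0$ on every nonzero feasible direction; by sublinearity and compactness of the unit sphere, $\phi(\Delta,\Sigma)\ge c\,\|(\Delta,\Sigma)\|$ for some $c>0$, so the first-order term dominates the $O(t^2)$ remainder and $(\Omg_0,\X_0)$ is a (strict) local minimum. Conversely, if the origin is not the unique solution there is a nonzero feasible $(\Delta,\Sigma)$ with $\mathcal{L}_\lambda(\Delta,\Sigma)\le\mathcal{L}_\lambda(\mathbf{0},\mathbf{0})$, whence $\phi(\Delta,\Sigma)\le 0$; when the inequality is strict, $F$ strictly decreases along the corresponding feasible curve, so $(\Omg_0,\X_0)$ is not a local minimum. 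The main obstacle is the borderline case $\phi(\Delta,\Sigma)=0$, a direction along which the linearised objective is flat: here the first-order analysis is inconclusive and one must control the second-order behaviour, namely the curvature $\ddot\Omg(0)$ of the retraction on the UNTF manifold together with the strictly convex term $\frac{\lambda}{2}\|\Sigma\|_F^2$, in order to decide minimality. Making the treatment of these degenerate flat directions rigorous, rather than the generic strict-descent argument, is where the real work lies; away from them the polyhedral-plus-smooth structure renders the first-order reduction exact.
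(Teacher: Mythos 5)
You have, in substance, reconstructed the only argument the paper gives: Proposition \ref{prop:linearized} is stated there without a formal proof, accompanied just by the remark that \eqref{eq:linearized} is the first-order approximation of the objective of \eqref{eq:optformulation} under $\Omg \leftarrow \Omg_0 + \Delta$, $\X \leftarrow \X_0 + \Sigma$, with the constraint replaced by its tangent space $\mathcal{T}_C$ at $\Omg_0$. Your first two paragraphs make that remark precise, and your proof of the sufficiency direction (uniqueness of the zero solution implies local minimality) is sound and goes beyond what the paper supplies: uniqueness plus polyhedrality of $\mathcal{L}_\lambda$ gives $\phi(\Delta,\Sigma) := \mathcal{L}_\lambda'(\mathbf{0};(\Delta,\Sigma)) \geq c\,\|(\Delta,\Sigma)\|$ on the feasible subspace, which dominates the $O(t^2)$ error of a smooth retraction onto the UNTF manifold.

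The flat directions you flag in the converse, however, are not merely ``where the real work lies''---they are where the stated equivalence actually fails, so no second-order analysis can close this gap. Simplest instance: $a=n=l=1$, $c=1$, $\Omg_0 = 1$ (so the UNTF set is $\{\pm 1\}$ and the tangent constraints force $\Delta = 0$), $\X_0 = 0$, and $\Y = y$ with $\lambda y = 1$. Near this point the objective of \eqref{eq:optformulation} is $f(x) = |x| + \tfrac{\lambda}{2}(y-x)^2$, with $f'(x) = \lambda x > 0$ for $x>0$ and $f'(x) = -2+\lambda x < 0$ for $x<0$, so $(\Omg_0,\X_0)$ is a strict (indeed global) minimum. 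Yet \eqref{eq:linearized} becomes $\min_{\sigma} |\sigma| - \lambda y\,\sigma = \min_{\sigma} |\sigma| - \sigma$, whose solution set is the whole ray $\sigma \geq 0$, not just the origin. Hence local minimality does \emph{not} imply that $(\mathbf{0},\mathbf{0})$ is the \emph{only} solution; it implies only that it is \emph{a} solution (equivalently $\phi \geq 0$ on all feasible directions), whereas uniqueness is equivalent to $\phi > 0$ and is merely sufficient for (strict) local minimality. The correct statement is therefore one-directional, unless one adds a nondegeneracy hypothesis excluding nonzero feasible $(\Delta,\Sigma)$ with $\mathcal{L}_\lambda(\Delta,\Sigma) = \mathcal{L}_\lambda(\mathbf{0},\mathbf{0})$. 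The paper inherits the same defect downstream: in the necessity half of the proof of Lemma \ref{lem:iffnoisy}, the negation of the strict inequality \eqref{eq:optiSign} allows equality, and in that borderline case the constructed direction $(\Delta_{wc},\Sigma_{wc})$ yields a vanishing, not negative, directional derivative, which does not contradict local minimality. So your diagnosis identifies a real flaw that the paper glosses over, rather than a deficiency of your own argument.
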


Proposition \ref{prop:linearized} is actually checking the local optimality of a pair, using the first-order approximation of the objective, which can be achieved using $\X \leftarrow \X_0 + \Sigma$ and $\Omg \leftarrow \Omg_0 + \Delta$. The constraint of (\ref{eq:linearized}) is the tangent space of the constraint of (\ref{eq:optformulation}) at $\Omg_0$,
\begin{equation*}
 \mathcal{T}_C := \{ \Delta : \Delta^T \Omg_0 + \Omg_{0}^T \Delta = \mathbf{0} \ \& \ \forall i \ \left< \omg_{0_i}, \delta_i\right>  = 0\},
\end{equation*}
which is a \emph{linear} subspace of $\R^{a \times n}$.

This proposition will be used in the next two subsections to investigate the conditions for the local optimality of a point in two different scenarios: (NA)AOL and NLAOL.

\subsection{Noise Aware Analysis Operator Learning}

The tightest condition for the local optimality of pair $(\Omg_0,\X_0)$, is the following \textit{necessary and sufficient} condition.

\begin{lem}\label{lem:iffnoisy}
 A pair $(\Omg_0,\X_0)$ is a local optimum of (\ref{eq:optformulation}) if and only if the following inequality holds,
\begin{equation}\label{eq:optiSign}
\begin{split}
\|(\Delta \X_0 + \Omg_0 \Sigma)_{\bar{\Lambda}} \|_1 > \ &|\left< \Delta \X_0 + \Omg_0 \Sigma, \sgn(\Omg_0 \X_0) \right> \\
						       & +  \lambda \tr\{\Sigma^T (\X_0 - \Y)\}|,
\end{split}
\end{equation}
for all non-zero $(\Delta,\Sigma) \in \mathcal{T}_C  \times \R^{n \times l}$.
\end{lem}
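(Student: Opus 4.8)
The plan is to combine Proposition~\ref{prop:linearized} with the polyhedral (piecewise-linear) structure of the surrogate objective $\mathcal{L}_\lambda$. By Proposition~\ref{prop:linearized}, the pair $(\Omg_0,\X_0)$ is a local minimum of (\ref{eq:optformulation}) if and only if $(\Delta,\Sigma)=(\mathbf{0},\mathbf{0})$ is the \emph{unique} minimiser of the convex program (\ref{eq:linearized}) over the linear subspace $\mathcal{T}_C \times \R^{n\times l}$. The crucial observation is that $\mathcal{L}_\lambda$ is polyhedral: it is the $\ell_1$ norm of an affine function of $(\Delta,\Sigma)$ plus a linear term, so along any ray $t\mapsto \mathcal{L}_\lambda(t\Delta,t\Sigma)$, $t\ge 0$, it is convex and piecewise-linear. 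The argument thus reduces to a one-sided directional-derivative computation at the origin.

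First I would compute the right directional derivative of $\mathcal{L}_\lambda$ at $(\mathbf{0},\mathbf{0})$ in a feasible direction $(\Delta,\Sigma)$. Writing the constant term $\Omg_0\X_0$ and the first-order perturbation $\Delta\X_0 + \Omg_0\Sigma$ of the argument of the $\ell_1$ norm, one differentiates entrywise: on the support $\Lambda := \operatorname{supp}(\Omg_0\X_0)$ the map $t\mapsto|(\Omg_0\X_0)_{ij} + t(\Delta\X_0+\Omg_0\Sigma)_{ij}|$ is smooth near $0$ with derivative $\sgn\{(\Omg_0\X_0)_{ij}\}(\Delta\X_0+\Omg_0\Sigma)_{ij}$, while on the cosupport $\bar{\Lambda}$ the right derivative of $|t(\Delta\X_0+\Omg_0\Sigma)_{ij}|$ is $|(\Delta\X_0+\Omg_0\Sigma)_{ij}|$. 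Adding the derivative of the linear term gives
\begin{equation*}
\|(\Delta\X_0 + \Omg_0\Sigma)_{\bar{\Lambda}}\|_1 + \left< \Delta\X_0 + \Omg_0\Sigma,\ \sgn(\Omg_0\X_0)\right> + \lambda \tr\{\Sigma^\mathsmaller{T}(\X_0 - \Y)\}.
\end{equation*}

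Next I would invoke the polyhedral structure to characterise uniqueness of the minimiser through this derivative. For a convex piecewise-linear $\phi(t):=\mathcal{L}_\lambda(t\Delta,t\Sigma)$, one has $\phi(t)>\phi(0)$ for all $t>0$ if and only if $\phi'(0^+)>0$: a negative right derivative contradicts minimality, whereas a vanishing one forces a flat initial segment and hence a non-unique minimiser. Combining this over all rays shows that $(\mathbf{0},\mathbf{0})$ is the unique minimiser of (\ref{eq:linearized}) over $\mathcal{T}_C\times\R^{n\times l}$ precisely when the displayed derivative is strictly positive for every nonzero feasible $(\Delta,\Sigma)$. Since $\mathcal{T}_C\times\R^{n\times l}$ is a linear subspace, $-(\Delta,\Sigma)$ is feasible whenever $(\Delta,\Sigma)$ is; writing strict positivity for both $\pm(\Delta,\Sigma)$ leaves $\|(\Delta\X_0+\Omg_0\Sigma)_{\bar{\Lambda}}\|_1$ unchanged but flips the sign of the remaining linear functional, so the two inequalities together collapse to the single absolute-value inequality (\ref{eq:optiSign}), which is exactly the claim.

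I expect the main obstacle to be the rigorous justification of the equivalence ``unique minimiser $\Leftrightarrow$ strictly positive one-sided derivative in every nonzero direction.'' This statement is false for general convex functions (e.g.\ $t^2$ at its minimiser has vanishing derivative), and the argument genuinely relies on $\mathcal{L}_\lambda$ being polyhedral to rule out both the tangency and the locally-flat cases. Care must also be taken to verify that the support $\Lambda$ stays fixed along a sufficiently short initial segment of each ray, which is what validates the entrywise differentiation, and to confirm that restricting the directions to the tangent subspace $\mathcal{T}_C$ (rather than the full curved constraint manifold) is exactly what Proposition~\ref{prop:linearized} licenses.
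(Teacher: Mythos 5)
Your proof is correct and follows essentially the same route as the paper: invoke Proposition~\ref{prop:linearized}, compute the one-sided directional derivative of $\mathcal{L}_\lambda$ at the origin by splitting entries into the support $\Lambda$ and cosupport $\bar{\Lambda}$, and use the symmetry of the linear subspace $\mathcal{T}_C\times\R^{n\times l}$ under sign flips to convert positivity along all rays into the absolute-value inequality (\ref{eq:optiSign}). Your explicit appeal to the polyhedral structure of $\mathcal{L}_\lambda$ to justify the equivalence between uniqueness of the minimiser and strict positivity of the directional derivative is a welcome tightening of a point the paper relegates to a footnote (and it sidesteps a sign slip in the paper's necessity argument, where the worst-case direction should be $s=-\sgn(\cdot)$ rather than $s=\sgn(\cdot)$).
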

\begin{proof}
 According to the proposition \ref{prop:linearized}, we can check the optimality of $(\Omg_0,\X_0)$ by checking that $(\Delta, \Sigma) = (\mathbf{0}, \mathbf{0})$ is the only solution of (\ref{eq:linearized}). The proof here is formed by showing that the objective of (\ref{eq:linearized}) increases if $(\Omg_0,\X_0)$ is replaced with $(\Omg_0 + \Delta,\X_0 + \Sigma)$, or equivalently,  $\lim_{t\downarrow0} \frac{1}{t} (\mathcal{L}_{\lambda}(t\Delta, t\Sigma) - \mathcal{L}_{\lambda}(\mathbf{0},\mathbf{0})) > 0, \ \forall \Sigma, \forall\Delta \in \mathcal{T}_C$\footnote{This condition for the (local) optimality is related to the positivity condition of directional derivative of $\mathcal{L}_{\lambda}$ for any admissible $\Delta$ and $\Sigma$ in the tangent cone(s) of the constraint(s), see for example \cite{Rockafellar97}.}, when (\ref{eq:optiSign}) is assured. From the definitions of $\mathcal{L}_{\lambda}(\Delta, \Sigma)$ in (\ref{eq:linearized}) and $\|\mathbf{A}\|_1 = \left<\mathbf{A},\sgn{\mathbf{A}} \right>$, 
we have,
\begin{equation}\label{eq:directionalDerivative}
 \begin{split}
  \mathcal{L}_{\lambda} & (t\Delta, t\Sigma) - \mathcal{L}_{\lambda}(\mathbf{0},\mathbf{0}) \\ 
=& \|\Omg_0 \X_0 + t(\Delta \X_0 + \Omg_0 \Sigma)\|_1  - \|\Omg_0 \X_0\|_1 + t\lambda \tr\{\Sigma^\mathsmaller{T} \E\} \\
=& \left< \Omg_0 \X_0 + t(\Delta \X_0 + \Omg_0 \Sigma), \sgn(\Omg_0 \X_0 + t(\Delta \X_0 + \Omg_0 \Sigma)) \right> \\
& - \left< \Omg_0 \X_0, \sgn(\Omg_0 \X_0) \right> + t\lambda \tr\{\Sigma^\mathsmaller{T} \E\}\\
\overset{t\downarrow 0}{=}& t\left< \Delta \X_0 + \Omg_0 \Sigma, \sgn(\Omg_0 \X_0)\right> \\
+& t\left< \Delta \X_0 + \Omg_0 \Sigma, \sgn((\Delta \X_0 + \Omg_0 \Sigma)_{\bar{\Lambda}})\right> + t\lambda \tr\{\Sigma^\mathsmaller{T} \E\}  \\
=& t\left< \Delta \X_0 + \Omg_0 \Sigma, \sgn(\Omg_0 \X_0)\right> \\
+& t\|(\Delta \X_0 + \Omg_0 \Sigma)_{\bar{\Lambda}}\|_1 + t\lambda \tr\{\Sigma^\mathsmaller{T} \E\}  \\
 \end{split}
\end{equation}
where $\E := \X_0 - \Y$. The positivity of (\ref{eq:directionalDerivative}) is assured when
\begin{equation*}
 \begin{split}
  \|(\Delta \X_0 + \Omg_0 \Sigma)_{\bar{\Lambda}}\|_1 >& \ |\left< \Delta \X_0 + \Omg_0 \Sigma, \sgn(\Omg_0 \X_0)\right> \\
						      &+ \lambda\tr\{\Sigma^\mathsmaller{T}\E\}| .
 \end{split}
\end{equation*}
This completes the sufficiency of (\ref{eq:optiSign}) for the optimality. If (\ref{eq:optiSign}) is violated, $\exists (\Delta ,\Sigma) \in \mathcal{T}_C \times \R^{m \times L}$ such that $|\left< \Delta \X_0 + \Omg_0 \Sigma, \sgn(\Omg_0 \X_0)\right> + \lambda\tr\{\Sigma^\mathsmaller{T}\E\}|$ is greater than $\|(\Delta \X_0 + \Omg_0 \Sigma)_{\bar{\Lambda}}\|_1$. As the constraint $\mathcal{T}_C$, is a linear space, when $\Delta$ and $\Sigma$ are admissible, $\pm \Delta$ and $\pm \Sigma$ are also admissible. If we set $s = \sgn(\left< \Delta \X_0 + \Omg_0 \Sigma, \sgn(\Omg_0 \X_0)\right> + \lambda\tr\{\Sigma^\mathsmaller{T}\E\}) \in \{-1,1\}$ and $(\Delta_{wc},\Sigma_{wc}) = (s \Delta,s \Sigma) \in \mathcal{T}_C \times \R^{n \times l}$, it is easy to show that  
\begin{equation*}
\begin{split}
&\left< \Delta_{wc} \X_0 + \Omg_0 \Sigma_{wc}, \sgn(\Omg_0 \X_0)\right> + \lambda \tr\{\Sigma_{wc}^\mathsmaller{T} \E\}\\
&+\left< \Delta_{wc} \X_0 + \Omg_0 \Sigma_{wc}, \sgn((\Delta_{wc} \X_0 + \Omg_0 \Sigma_{wc})_{\bar{\Lambda}})\right> < 0 ,
 \end{split}
\end{equation*}
which contradicts the optimality of $(\Omg_0,\X_0)$, \textit{i.e.} $\exists (\Delta,\Sigma) \in \mathcal{T}_C  \times \R^{n \times l}, \ \lim_{t\downarrow0} \frac{1}{t} (\mathcal{L}_{\lambda}(t\Delta, t\Sigma) - \mathcal{L}_{\lambda}(\mathbf{0},\mathbf{0})) > 0$. This completes the necessary part of the lemma.
\end{proof}

\begin{remark}
 As lemma \ref{lem:iffnoisy} is valid for any non-zero admissible pair $(\Delta, \Sigma)$, we can choose $(\Delta, \mathbf{0})$ and $(\mathbf{0},\Sigma)$ and get the following \textit{necessary} conditions for optimality,
\begin{equation*}
 \|(\Delta \X_0)_{\bar{\Lambda}}\|_1 > \ |\left< \Delta \X_0, \sgn(\Omg_0 \X_0)\right>|, \ \forall \Delta \in \mathcal{T}_C ,
\end{equation*}
\begin{equation*}
 \|(\Omg_0 \Sigma)_{\bar{\Lambda}}\|_1 > \ |\left< \Omg_0 \Sigma, \sgn(\Omg_0 \X_0)\right> + \lambda\tr\{\Sigma^\mathsmaller{T}\E\}| .
\end{equation*}

\end{remark}

\begin{lem}\label{lem:Noisysufficient}
 A \textit{sufficient} condition for the local optimality of a pair $(\Omg_0, \X_0)$ for (\ref{eq:optformulation}) is,
\begin{equation} \label{eq:noisysuff}
 \begin{split}
\|(\Delta \X_0 + \Omg_0 \Sigma)_{\bar{\Lambda}} \|_1 > \ & \| (\Delta \X_0 + \Omg_0 \Sigma)_{\Lambda} \|_1 \\
						       & +  \lambda|\tr\{\Sigma^T(\X_0 - \Y)\}|,
\end{split}
\end{equation}
for all $(\Delta,\Sigma) \in \mathcal{T}_C \times \R^{n \times l}$.
\end{lem}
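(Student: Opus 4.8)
The plan is to obtain this sufficient condition as an immediate consequence of the necessary and sufficient characterisation already established in Lemma \ref{lem:iffnoisy}. Since the left-hand sides of (\ref{eq:optiSign}) and (\ref{eq:noisysuff}) are identical, namely $\|(\Delta \X_0 + \Omg_0 \Sigma)_{\bar{\Lambda}}\|_1$, it suffices to show that the hypothesis (\ref{eq:noisysuff}) forces the right-hand side of (\ref{eq:optiSign}) to be no larger than the right-hand side of (\ref{eq:noisysuff}), for every non-zero admissible pair $(\Delta, \Sigma) \in \mathcal{T}_C \times \R^{n \times l}$. Once this majorisation is in place, (\ref{eq:optiSign}) holds and Lemma \ref{lem:iffnoisy} delivers the local optimality of $(\Omg_0, \X_0)$.

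For the key step I would abbreviate $W := \Delta \X_0 + \Omg_0 \Sigma$ and note that $\sgn(\Omg_0 \X_0)$ is supported exactly on $\Lambda$, the support of $\Omg_0 \X_0$, because the entrywise sign vanishes wherever its argument is zero. Consequently the cross term only sees the on-support part of $W$, i.e. $\left< W, \sgn(\Omg_0 \X_0)\right> = \left< W_\Lambda, \sgn(\Omg_0 \X_0)\right>$, and since every entry of $\sgn(\Omg_0 \X_0)$ is bounded by one in magnitude, the entrywise H\"older (that is, $\ell_1$/$\ell_\infty$) inequality yields $|\left< W, \sgn(\Omg_0 \X_0)\right>| \le \|W_\Lambda\|_1 = \|(\Delta \X_0 + \Omg_0 \Sigma)_\Lambda\|_1$. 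Applying the triangle inequality to the absolute value on the right of (\ref{eq:optiSign}) then gives
\begin{equation*}
\begin{split}
\bigl| \left< W, \sgn(\Omg_0 \X_0)\right> &+ \lambda\tr\{\Sigma^T(\X_0 - \Y)\} \bigr| \\
&\le \|(\Delta \X_0 + \Omg_0 \Sigma)_\Lambda\|_1 + \lambda\bigl|\tr\{\Sigma^T(\X_0 - \Y)\}\bigr|,
\end{split}
\end{equation*}
which is precisely the right-hand side of (\ref{eq:noisysuff}); combining it with the assumed strict inequality (\ref{eq:noisysuff}) produces (\ref{eq:optiSign}), as desired.

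I do not expect any serious obstacle here, since the lemma is by design a convenient relaxation of Lemma \ref{lem:iffnoisy}: it replaces the exact, sign-dependent quantity $\left< W, \sgn(\Omg_0\X_0)\right>$ by the larger but sign-independent bound $\|W_\Lambda\|_1$, trading tightness for a criterion that is easier to test numerically. The only points deserving care are the harmless imprecision in the statement---the inequality should be required for \emph{non-zero} pairs, as the zero pair makes both sides vanish---and the identification of the support of $\sgn(\Omg_0\X_0)$ with $\Lambda$, which is what guarantees that the cross term depends only on $W_\Lambda$ and not on $W_{\bar{\Lambda}}$.
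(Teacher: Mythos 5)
Your proposal is correct and follows essentially the same route as the paper's proof: both reduce Lemma \ref{lem:Noisysufficient} to Lemma \ref{lem:iffnoisy} by bounding the cross term $\left< \Delta \X_0 + \Omg_0 \Sigma, \sgn(\Omg_0 \X_0)\right>$ by $\| (\Delta \X_0 + \Omg_0 \Sigma)_{\Lambda} \|_1$ using the fact that $\sgn(\Omg_0 \X_0)$ is supported on $\Lambda$ with entries of magnitude at most one. Your write-up is in fact slightly more careful than the paper's, since you make explicit the triangle inequality that absorbs the $\lambda\tr\{\Sigma^T(\X_0-\Y)\}$ term and you flag the need to restrict to non-zero pairs $(\Delta,\Sigma)$, both of which the paper leaves implicit.
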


\begin{proof}
 The proof is based on finding an upper bound for the right hand side of (\ref{eq:optiSign}). Note that the maximum of $\left< \Delta \X_0 + \Omg_0 \Sigma, \sgn(\Omg_0 \X_0) \right>$  may be achieved when $\sgn(\Delta \X_0 + \Omg_0 \Sigma)$ is equal to $\sgn(\Omg_0 \X_0)$ on $\Lambda$. Thus,
\[
 \left< \Delta \X_0 + \Omg_0 \Sigma, \sgn(\Omg_0 \X_0) \right> \le \| (\Delta \X_0 + \Omg_0 \Sigma)_{\Lambda} \|_1. 
\]
This means that condition \eqref{eq:noisysuff} implies the condition of Lemma~\ref{lem:iffnoisy} and $(\Omg_0,\X_0)$ is thus a local minimum for (\ref{eq:optformulation}). 
\end{proof}
\begin{remark}
 Note that lemma \ref{lem:Noisysufficient} only presents a \textit{sufficient} condition, as the sign pattern match \textbf{may} not happen, \textit{i.e.} $\sgn(\Delta \X_0 + \Omg_0 \Sigma) \ne \sgn(\Omg_0 \X_0), \ \forall \Delta \in \mathcal{T}_C, \Sigma$.
\end{remark}
\begin{remark}
 Although the recovery condition here has some similarities with the dictionary recovery conditions \cite{Gribonval10,Geng11}, the analysis learning formulation is slightly more involved as it is not possible to check the local optimality based on each parameter, $\Omg$ or $\X$, separately. This is caused by the fact that the non-differentiable term, \textit{i.e} $\ell_1$, is a function of both parameters. Here, it makes the problem non-separable and the joint optimality condition is needed to be checked.
\end{remark}

\subsection{Noiseless Analysis Operator Learning}\label{sec:naolt}
When the noise and model mismatch does not exist, we can solve (\ref{eq:optformulation}) with $\lambda \rightarrow \infty$. This case may be considered as an ideal operator learning form, as $\Y$ is here exactly cosparse. This formulation can be used as a benchmark for the operator recovery, very similar to the framework in \cite{Gribonval10,Geng11} for dictionary recovery.
In this setting we have $\X = \Y \in \R^{n \times l}$, we can simplify the problem as follows,
\begin{equation}\label{eq:noislessoptformulation}
 \begin{split}
 \min_{\mathsmaller{\Omg}} \|\Omg \X\|_1 \ & \subto  \ \Omg^T \Omg = \I \\
			& \ \ \forall_i \ \|\omg_i\|_2 = c.
\end{split}
\end{equation}
As this formulation has a single parameter $\Omg$, the analysis is significantly easier. (\ref{eq:noislessoptformulation}) has $n \times a$, \textit{i.e.} $\Omg_{a \times n}$, unknown parameters and $n^2 + a$ constraints. For a full rank $\X$, the system of equations is underdetermined if $na > n^2 +a$. As we assume $\X$ is \emph{rich enough} and therefore full rank, minimising the objective $\|\Omg \X\|_1$ is reasonable if $a > n^2 / (n-1)$. 

We use formulation (\ref{eq:noislessoptformulation}) to recover operator $\Omg_0$, while $\X$ is cosparse with $\Omg_0$. The following proposition investigate the local optimality conditions of a point $\Omg_0$ for (\ref{eq:noislessoptformulation}), which is actually equivalent to Proposition \ref{prop:linearized}, when $\lambda \rightarrow \infty$. This proposition has a flavor similar to those obtained in the context of dictionary learning for the synthesis model~\cite{Gribonval10}, as well as in hybrid synthesis/analysis framework~\cite{Jaillet10}.

\begin{proposition}\label{pro:localiden}
 An operator $\Omg_0$ is locally identifiable using (\ref{eq:noislessoptformulation}), if and only if $\Delta = 0$ is the only solution of the following program,
\begin{equation}\label{eq:noiselesslinerized}
\begin{split}
 \min_\mathsmaller{\Delta} \| (\Omg_0 + \Delta) \X \|_1  \ 
		    \subto \ &\Delta^T \Omg_0 + \Omg_{0}^T \Delta = \mathbf{0}  \\
			     & \forall i \ \left< \omg_{0_i}, \delta_i\right> = 0.
\end{split}
\end{equation}
\end{proposition}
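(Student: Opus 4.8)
The plan is to unpack local identifiability, via Definition~\ref{def:localident}, as the single statement that $\Omg_0$ is a local minimiser of (\ref{eq:noislessoptformulation}), and then to run the first-order variational argument behind Proposition~\ref{prop:linearized} for the one-variable problem in which $\X=\Y$ is held fixed (so no $\Sigma$ perturbation enters). First I would confirm that the feasible set is a smooth embedded manifold whose tangent space at $\Omg_0$ is exactly $\mathcal{T}_C$: differentiating $\Omg^T\Omg=\I$ along a smooth feasible curve $\Omg(t)$ with $\Omg(0)=\Omg_0$ and $\dot\Omg(0)=\Delta$ yields $\Delta^T\Omg_0+\Omg_0^T\Delta=\mathbf 0$, and differentiating $\|\omg_i\|_2^2=c^2$ yields $\langle\omg_{0_i},\delta_i\rangle=0$ for each $i$. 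These are precisely the two linear conditions defining $\mathcal{T}_C$.

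The core of the argument is to compare the objective on the manifold with its behaviour on the affine tangent space $\Omg_0+\mathcal{T}_C$. The map $g(\Delta):=\|(\Omg_0+\Delta)\X\|_1$ is convex and piecewise linear in $\Delta$ (an $\ell_1$ norm of a linear image), so its restriction to the linear subspace $\mathcal{T}_C$ is convex and its local and global minima over $\mathcal{T}_C$ coincide. Its one-sided directional derivative at the origin has the explicit form
\[
g'(\mathbf 0;\Delta)=\langle \Delta\X,\sgn(\Omg_0\X)\rangle+\|(\Delta\X)_{\bar\Lambda}\|_1,
\]
where $\Lambda$ is the support of $\Omg_0\X$; this is exactly (\ref{eq:directionalDerivative}) evaluated at $\Sigma=\mathbf 0$. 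By piecewise linearity, $\Delta=\mathbf 0$ is the unique solution of (\ref{eq:noiselesslinerized}) if and only if $g'(\mathbf 0;\Delta)>0$ for every nonzero $\Delta\in\mathcal{T}_C$, and by homogeneity together with compactness of the unit sphere of $\mathcal{T}_C$ this strict positivity is uniform, say $g'(\mathbf 0;\Delta)\ge\mu>0$ for unit $\Delta$.

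For the ``if'' direction I would expand any feasible curve as $\Omg(t)=\Omg_0+t\Delta+t^2 r(t)$ with $\Delta\in\mathcal{T}_C$ and $r$ bounded, and estimate
\[
\|\Omg(t)\X\|_1 \ \ge\ \|(\Omg_0+t\Delta)\X\|_1 - t^2\|r(t)\X\|_1 \ =\ \|\Omg_0\X\|_1 + t\,g'(\mathbf 0;\Delta) - O(t^2),
\]
using that $g$ is affine along the ray $t\mapsto t\Delta$ for small $t$. The uniform bound $g'(\mathbf 0;\Delta)\ge\mu$ then makes the first-order term dominate the $O(t^2)$ curvature, so $\|\Omg(t)\X\|_1>\|\Omg_0\X\|_1$ for all small $t>0$ along every curve, i.e.\ $\Omg_0$ is a strict local minimum. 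For the ``only if'' direction, if $\Delta=\mathbf 0$ is not the unique solution of (\ref{eq:noiselesslinerized}) then some nonzero $\Delta^\ast\in\mathcal{T}_C$ satisfies $g'(\mathbf 0;\Delta^\ast)\le 0$; choosing a feasible curve with $\dot\Omg(0)=\Delta^\ast$ gives $\|\Omg(t)\X\|_1\le\|\Omg_0\X\|_1+t\,g'(\mathbf 0;\Delta^\ast)+O(t^2)$, which for a strict descent direction contradicts local minimality. This is exactly the $\lambda\to\infty$, $\Sigma=\mathbf 0$ specialisation of (\ref{eq:linearized}): the penalty $\lambda\tr\{\Sigma^T(\X_0-\Y)\}$ vanishes since $\X_0=\Y$, and the objective collapses to $\|(\Omg_0+\Delta)\X\|_1$.

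The main obstacle I anticipate is the degenerate ``tie'' case in the ``only if'' direction, where $g'(\mathbf 0;\Delta^\ast)=0$ along a flat direction of the convex program. Here the first-order term no longer certifies that $\Omg_0$ fails to be a local minimum, and the $O(t^2)$ curvature of the manifold could in principle turn such a flat tangent direction into a strict increase. Closing this case requires exploiting the piecewise-linear structure of $g$ (a vanishing directional derivative forces $g$ to be constant along an initial segment of the ray) together with a controlled second-order expansion of $\mathcal{C}$, and ensuring the remainder is genuinely uniform over the admissible curves; this second-order comparison is the delicate part, whereas the remainder is the routine specialisation of Proposition~\ref{prop:linearized}.
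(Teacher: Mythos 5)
You take the same route the paper intends: Proposition~\ref{pro:localiden} is read as the specialisation of Proposition~\ref{prop:linearized} to the noiseless setting ($\lambda\rightarrow\infty$, $\X=\Y$ held fixed, hence $\Sigma=\mathbf{0}$), i.e.\ a first-order comparison between the manifold-constrained problem (\ref{eq:noislessoptformulation}) and the exact objective restricted to the tangent space $\mathcal{T}_C$. In fact your write-up contains strictly more than the paper does: the paper states the proposition without proof, remarking only that it is ``actually equivalent to Proposition~\ref{prop:linearized}, when $\lambda\rightarrow\infty$'', and Proposition~\ref{prop:linearized} itself is justified only informally as checking local optimality via a first-order approximation. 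Your identification of $\mathcal{T}_C$ as the tangent space of the UNTF constraint, the directional-derivative formula (the $\Sigma=\mathbf{0}$ case of (\ref{eq:directionalDerivative})), the equivalence between uniqueness of $\Delta=\mathbf{0}$ in the convex piecewise-linear program and strict positivity of $g'(\mathbf{0};\Delta)$ on $\mathcal{T}_C\setminus\{\mathbf{0}\}$, and the compactness argument making the first-order margin dominate the $O(t^2)$ deviation of feasible curves from the tangent plane, are all correct; this settles the ``if'' direction rigorously.

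The difficulty you flag in the ``only if'' direction is a genuine gap, and the paper does not close it either. If a nonzero $\Delta^{\ast}\in\mathcal{T}_C$ minimises (\ref{eq:noiselesslinerized}) with $g'(\mathbf{0};\Delta^{\ast})=0$, then along any feasible curve $\Omg(t)=\Omg_0+t\Delta^{\ast}+\frac{t^2}{2}\mathbf{W}+o(t^2)$ the objective equals $\|\Omg_0\X\|_1+O(t^2)$, and the sign of the $t^2$ term depends on the second-order jet $\mathbf{W}$, which the UNTF constraint forces to satisfy $\mathbf{W}^T\Omg_0+\Omg_0^T\mathbf{W}=-2\Delta^{\ast T}\Delta^{\ast}$ and $\left<\omg_{0_i},\mathbf{w}_i\right>=-\|\delta^{\ast}_i\|_2^2$ but does not fully determine. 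First-order data therefore cannot exclude that $\Omg_0$ remains a local minimum on the manifold while $\Delta=\mathbf{0}$ fails to be the unique solution of the linearised program, so the stated equivalence needs either a second-order analysis over all admissible $\mathbf{W}$ or a nondegeneracy hypothesis ruling out flat directions of the convex program. The paper's subsequent results take the proposition for granted and work entirely inside the linearised program (the proof of Lemma~\ref{lem:iffnoiseless} correctly characterises uniqueness there via strict increase along rays, which is legitimate for a piecewise-linear convex objective), so the bridge from manifold local minimality back to that uniqueness is never established in the paper; your proposal is the only place where this limitation of the purely first-order argument is made explicit.
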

Note that (\ref{eq:noiselesslinerized}) has a convex objective with a linear constraint. This matrix valued convex problem has a close relation to the analysis parsimony problems investigated in \cite{Elad07c,Candes11,Nam11b,Vaiter11}, where $\X^\mathsmaller{T}$ acts as the analysis operator of the columns of $(\Omg_0 +\Delta)^T$. We present some conditions for locally identifiability of a $\Omg_0$ using variational analysis. A similar technique for the analysis parsimony problems have already been used in \cite{Nam11b,Vaiter11}. 

The kernel of the matrix $\X$ has an important role in the identifiability of the analysis operator. Let $\X_{n \times l} = \U_{n \times n} \Sigma_{n \times l} \V_{l \times l}^\mathsmaller{T}$ be a singular value decomposition of $\X$. We can now partition $\V$ into two parts, \textit{i.e.} one part $\V_{1}^\mathsmaller{T}$ is multiplied to $\Sigma_1 = \operatorname{diag}(\sigma_i) \in \R^{n \times n}$, which is the non-zero diagonal part of $\Sigma$, and $\V_0$, a basis for the kernel of $\X$, and find $\X = \U \Sigma_{1} \V_{1}^\mathsmaller{T}$ and $\X^\mathsmaller{\dagger} = \V_{1} \Sigma_{1}^\mathsmaller{-1} \U^\mathsmaller{T}$. Let two new parameters be defined as $\Z_0 := \Omg_0 \X \ , \ \Delta_z := \Delta \X \ , \in \R^{a \times l}$. We can now reformulate (\ref{eq:noiselesslinerized}) based on the new parameters as follows,
\begin{equation}\label{eq:noiselesslinerizedreparametrized}
\begin{split}
 \min_\mathsmaller{\Delta_z} \| \Z_0 + \Delta_z \|_1  \ 
		    \subto \  \V_{1}^\mathsmaller{T}\left(\Delta_{z}^\mathsmaller{T} \Z_0 + \Z_{0}^\mathsmaller{T} \Delta_{z}\right) \V_{1} &= \mathbf{0}  \\
			      \Delta_{z} \left(\V_{1} \Sigma_{1}^\mathsmaller{-2} \V_{1}^\mathsmaller{T} \right) \Z_{0}^\mathsmaller{T} \circ \I &= \mathbf{0} \\
			      \Delta_{z} \V_{0} &= \mathbf{0}
\end{split}
\end{equation}
where $\circ$ is the Hadamard product. The constraint of (\ref{eq:noiselesslinerizedreparametrized}) is linear, which it means, we can represent it based on a linear operator $\mathbf{\Psi}: \R^{al} \rightarrow \R^{n^2 + a + a(l-n)}$ and the vector version of $\Delta_{z}$, $\delta_{z}^\mathsmaller{v} = \operatorname{vect}(\Delta_{z})$ as $\mathbf{\Psi} \delta_{z}^\mathsmaller{v} = \mathbf{0}$. Such an operator is derived in Appendix \ref{app:phi}. $\mathbf{\Psi}$ has a nontrivial kernel when $a > n^2 / (n-1)$. We can now characterise the kernel subspace of the constraint of (\ref{eq:noiselesslinerizedreparametrized}), $\mathcal{T}_{C_s}$, as follows,
\begin{equation*}
 \mathcal{T}_{C_s} = \{\Delta \in \R^{a \times l} : \mathbf{\Psi} \delta^\mathsmaller{v} = \mathbf{0}, \delta^\mathsmaller{v} = \vect(\Delta) \}
\end{equation*}

\begin{lem}\label{lem:iffnoiseless}
 An operator $\Omg_0$ is identifiable using the cosparse training samples $\X$, where $\forall i:  \operatorname{supp}(\Omg \x_i) = \lambda_i$, and (\ref{eq:noislessoptformulation}), if and only if for all $\Delta_{z} \in \mathcal{T}_{C_s}$,
\begin{equation}\label{eq:iffnoiseless}
 |\left< \sgn(\Omg_0 \X), \Delta_z  \right>| < \|(\Delta_z)_{\bar\Lambda}\|_1
\end{equation}
where $\bar\Lambda$ is the cosupport of $\X$, \textit{i.e.} $\bar\Lambda  =  \{(i,j): (\Omg_0 \X)_{i \: j} = 0\}$.
\end{lem}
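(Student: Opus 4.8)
The plan is to combine Proposition \ref{pro:localiden} with the reparametrisation already carried out in the text to reduce the question to a statement about the one-sided directional derivative of an $\ell_1$ objective on a linear subspace, and then to read off the inequality (\ref{eq:iffnoiseless}) directly from that derivative.

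First I would invoke Proposition \ref{pro:localiden}: $\Omg_0$ is identifiable if and only if $\Delta = \mathbf{0}$ is the only minimiser of (\ref{eq:noiselesslinerized}). Since $\X$ has full row rank $n$, the linear map $\Delta \mapsto \Delta\X$ is injective, so $\Delta = \mathbf{0} \iff \Delta_z := \Delta\X = \mathbf{0}$, and the feasible set of (\ref{eq:noiselesslinerized}) is carried bijectively onto the kernel $\mathcal{T}_{C_s}$ of the constraints of (\ref{eq:noiselesslinerizedreparametrized}). Hence identifiability of $\Omg_0$ is equivalent to the statement that $\Delta_z = \mathbf{0}$ is the \emph{unique} minimiser of the convex function $g(\Delta_z) := \|\Z_0 + \Delta_z\|_1$ over the linear subspace $\mathcal{T}_{C_s}$, where $\Z_0 = \Omg_0\X$.

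Next I would exploit that $g$ is convex and piecewise linear. For such a function restricted to a subspace, $\mathbf{0}$ is the unique minimiser if and only if the one-sided directional derivative $g'(\mathbf{0};\Delta_z) = \lim_{t\downarrow 0} t^{-1}\big(\|\Z_0 + t\Delta_z\|_1 - \|\Z_0\|_1\big)$ is strictly positive for every nonzero $\Delta_z \in \mathcal{T}_{C_s}$: if every such derivative is positive, then $t \mapsto g(t\Delta_z)$ is strictly increasing for $t>0$ by convexity, giving uniqueness; conversely, if $g'(\mathbf{0};\Delta_z)=0$ for some $\Delta_z \ne \mathbf{0}$, then piecewise-linearity forces $g$ to stay constant on an initial segment $\{t\Delta_z : 0 \le t \le \epsilon\}$, contradicting uniqueness. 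Splitting the entrywise sum over the support $\Lambda$ and cosupport $\bar\Lambda$ of $\Z_0$, the entries where $(\Z_0)_{ij}\ne 0$ keep their sign for small $t$ and contribute $\langle \sgn(\Omg_0\X),\Delta_z\rangle$, while the entries on $\bar\Lambda$ contribute $\|(\Delta_z)_{\bar\Lambda}\|_1$; thus $g'(\mathbf{0};\Delta_z) = \langle \sgn(\Omg_0\X),\Delta_z\rangle + \|(\Delta_z)_{\bar\Lambda}\|_1$.

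Finally, because $\mathcal{T}_{C_s}$ is a linear subspace it is symmetric under $\Delta_z \mapsto -\Delta_z$, so requiring $g'(\mathbf{0};\Delta_z)>0$ for all nonzero $\Delta_z$ is the same as requiring both $\langle \sgn(\Omg_0\X),\Delta_z\rangle + \|(\Delta_z)_{\bar\Lambda}\|_1 > 0$ and $-\langle \sgn(\Omg_0\X),\Delta_z\rangle + \|(\Delta_z)_{\bar\Lambda}\|_1 > 0$; together these read exactly $|\langle \sgn(\Omg_0\X),\Delta_z\rangle| < \|(\Delta_z)_{\bar\Lambda}\|_1$, which is (\ref{eq:iffnoiseless}) (understood for nonzero $\Delta_z$, since the inequality fails trivially at $\Delta_z = \mathbf{0}$). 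I expect the main obstacle to be the ``only if'' half of the directional-derivative characterisation---arguing that a vanishing one-sided derivative genuinely produces a second minimiser rather than merely a stationary direction---which is precisely where piecewise-linearity of the $\ell_1$ objective, as opposed to a generic convex penalty, is essential.
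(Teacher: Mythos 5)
Your proposal is correct and follows essentially the same route as the paper's own proof: both reduce the question, via Proposition \ref{pro:localiden} and the reparametrisation $\Delta_z = \Delta\X$, to the strict positivity of the one-sided directional derivative $\left<\sgn(\Omg_0\X),\Delta_z\right> + \|(\Delta_z)_{\bar\Lambda}\|_1$ of the $\ell_1$ objective at $\mathbf{0}$ along $\mathcal{T}_{C_s}$, and then invoke the symmetry $\Delta_z \mapsto -\Delta_z$ of that linear subspace to obtain the absolute-value inequality (\ref{eq:iffnoiseless}). Your explicit justifications of the injectivity of $\Delta \mapsto \Delta\X$ and of the ``only if'' direction via piecewise-linearity of the objective merely fill in steps the paper leaves implicit.
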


\begin{proof}
As (\ref{eq:noiselesslinerized}) is a convex problem, $\Delta = 0$ is the only solution iff $\|(\Omg_0 + t\Delta)\X_0\|_1 - \|\Omg_0\X_0\|_1 > 0, \ \forall \Delta \in \mathcal{T}_{C_s}$ and $t \downarrow 0$. This is equivalent to the statement based on (\ref{eq:noiselesslinerizedreparametrized}) as follows, $\Delta = \Delta_z \X^{\dagger} = \mathbf{0}$ is the only solution of (\ref{eq:noiselesslinerized}) iff $\|\Z_0 + t\Delta_z\|_1 - \|\Z_0\|_1 > 0, \ \forall \Delta_z \in \mathcal{T}_{C_s}$ and $t \downarrow 0$. We can now find the necessary and sufficient condition for the subject as follow,
\begin{equation*}
 \begin{split}
  \|\Z_0 \!+t\Delta_z\|_1 \!- \|\Z_0\|_1 &= \left< \Z_0 \!+t\Delta_z, \sgn(\Z_0 \! +t\Delta_z) \right> \! - \|\Z_0\|_1\\
				    &\overset{t\downarrow 0}{=} \left< \Z_0 +t\Delta_z, \sgn(\Z_0) \right> \\
				    & \quad + t \left< \Delta_z, \sgn(\Delta_z) \right> \! - \|\Z_0\|_1\\
				    &= t \left< \Delta_z, \sgn(\Z_0) \right> + t \| (\Delta_z)_{\bar\Lambda}\|_1
 \end{split}
\end{equation*}
As $t>0$, positivity of $ \|\Z_0 \!+t\Delta_z\|_1 \!- \|\Z_0\|_1 $ is equivalent to the positivity of $\left< \Delta_z, \sgn(\Z_0) \right> +  \| (\Delta_z)_{\bar\Lambda}\|_1$. $\mathcal{T}_{C_s}$ is a linear space and therefore if $\Delta_z \in \mathcal{T}_{C_s}$, $-\Delta_z \in \mathcal{T}_{C_s}$ too. This freedom in choosing the sign of $\Delta_z$, lets us have $\left< \Delta_{z}', \sgn(\Z_0) \right>  = -|\left< \Delta_z, \sgn(\Z_0) \right>|$ for any given $\Delta_{z}$, such that $|\left< \Delta_z, \sgn(\Z_0) \right>| = |\left< \Delta_{z}', \sgn(\Z_0) \right>|$. Therefore, the necessary and sufficient condition for $\Delta = \mathbf{0}$ to be the only solution of (\ref{eq:noiselesslinerized}) is $\forall \Delta_z \in \mathcal{T}_{C_s}$, $|\left< \Delta_z, \sgn(\Omg_0 \X) \right>|  <  \| (\Delta_z)_{\bar\Lambda}\|_1$
\end{proof}

\begin{theorem}\label{theo:identifiability}
 Any analysis operator $\Omg_0 : \R^n \rightarrow \R^a$ is identifiable by (\ref{eq:noislessoptformulation}), where  $\X = [\x_i]_\mathsmaller{i \in [1,l]}$, such that $\forall i: \operatorname{supp}(\Omg \x_i) = \lambda_i,\ a - |\lambda_i| \ge q $, is a training $q$-cosparse matrix, if $\forall \Delta_{z} \in \mathcal{T}_{C_s}$,
\begin{equation}\label{eq:identifiability}
  \| (\Delta_z)_{\Lambda}\|_1 <  \| (\Delta_z)_{\bar\Lambda}\|_1
\end{equation}
where $\bar\Lambda = \Lambda^{c}$.
\end{theorem}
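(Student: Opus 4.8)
The plan is to show that the hypothesis (\ref{eq:identifiability}) is strictly stronger than the necessary-and-sufficient condition (\ref{eq:iffnoiseless}) already established in Lemma \ref{lem:iffnoiseless}, so that the theorem follows immediately as a corollary. The only real work is to bound the inner-product term $|\left< \sgn(\Omg_0 \X), \Delta_z \right>|$ appearing in (\ref{eq:iffnoiseless}) by the quantity $\|(\Delta_z)_{\Lambda}\|_1$ appearing in (\ref{eq:identifiability}).

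First I would observe that $\sgn(\Omg_0 \X)$ is supported on $\Lambda$: by the definition of the cosupport, $(\Omg_0 \X)_{i\,j} = 0$ exactly on $\bar\Lambda$, so the single-valued sign pattern vanishes there and has entries of modulus one on $\Lambda$. Consequently the inner product splits as $\left< \sgn(\Omg_0 \X), \Delta_z \right> = \left< \sgn(\Omg_0 \X)_{\Lambda}, (\Delta_z)_{\Lambda} \right>$, and $\ell_1$--$\ell_\infty$ duality (H\"older's inequality) gives
\begin{equation*}
 |\left< \sgn(\Omg_0 \X), \Delta_z \right>| \le \|\sgn(\Omg_0 \X)_{\Lambda}\|_\infty \, \|(\Delta_z)_{\Lambda}\|_1 = \|(\Delta_z)_{\Lambda}\|_1 ,
\end{equation*}
since every nonzero entry of a sign pattern has magnitude one. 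Combining this bound with the hypothesis (\ref{eq:identifiability}), for every nonzero $\Delta_z \in \mathcal{T}_{C_s}$ I obtain
\begin{equation*}
 |\left< \sgn(\Omg_0 \X), \Delta_z \right>| \le \|(\Delta_z)_{\Lambda}\|_1 < \|(\Delta_z)_{\bar\Lambda}\|_1 ,
\end{equation*}
which is precisely inequality (\ref{eq:iffnoiseless}). By Lemma \ref{lem:iffnoiseless} this guarantees that $\Delta = \mathbf{0}$ is the only minimiser of the linearised program, and hence $\Omg_0$ is locally identifiable, completing the argument.

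There is essentially no deep obstacle here; the result is a deliberate \emph{relaxation} of Lemma \ref{lem:iffnoiseless} into a sign-independent condition that is cheaper to test numerically. The one point requiring care is the degenerate direction $\Delta_z \ne \mathbf{0}$ with $(\Delta_z)_{\bar\Lambda} = \mathbf{0}$: such a perturbation confined to $\Lambda$ would force $\|(\Delta_z)_{\Lambda}\|_1 < 0$, which is impossible, so (\ref{eq:identifiability}) already excludes any nonzero admissible perturbation supported entirely on $\Lambda$ --- exactly the degeneracy that would otherwise break identifiability. This also clarifies why the condition is conservative, as the numerical experiments report: the H\"older step discards the alignment between $\sgn(\Omg_0 \X)$ and $(\Delta_z)_{\Lambda}$, so many operators failing (\ref{eq:identifiability}) may still satisfy the sharp condition (\ref{eq:iffnoiseless}).
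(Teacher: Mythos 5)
Your proof is correct and takes essentially the same route as the paper: the paper also reduces Theorem \ref{theo:identifiability} to Lemma \ref{lem:iffnoiseless} by bounding $\left< \sgn(\Omg_0 \X), \Delta_z \right>$ by $\|(\Delta_z)_{\Lambda}\|_1$, and its entrywise estimate $\sum_{(i,j)\in\Lambda}|(\Delta_z)_{i,j}|\sgn(\Delta_z)_{i,j}\sgn(\Omg_0\X)_{i,j} \le \sum_{(i,j)\in\Lambda}|(\Delta_z)_{i,j}|$ is exactly your H\"older ($\ell_1$--$\ell_\infty$) step. Your closing observations (the degenerate direction supported on $\Lambda$, and conservativeness due to discarding sign alignment) likewise mirror the paper's remark that (\ref{eq:identifiability}) is only a sufficient, sign-independent relaxation of (\ref{eq:iffnoiseless}).
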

\begin{proof}
This theorem is a generalisation of lemma \ref{lem:iffnoiseless}. The proof is based on, firstly, finding a lower-bound for the left-hand side of (\ref{eq:iffnoiseless}) and, secondly, generalising the lemma using any \textit{possible} cosparsity pattern $\bar\Lambda$. 

$\left< \sgn(\Omg_0 \X), \Delta_z  \right>$ is never greater than $\left< \sgn(\Delta_z), \Delta_z  \right>$, as,
\begin{equation*}
 \begin{split}
  \left< \sgn(\Omg_0 \X), \Delta_z  \right> & = \sum_{(i,j) \in \Lambda} |(\Delta_z)_{i,j}| \sgn(\Delta_z)_{i,j} \sgn(\Omg_0 \X)_{i,j}\\
					 & \le \sum_{(i,j) \in \Lambda} |(\Delta_z)_{i,j}| \\
					 & = \sum_{(i,j) \in \Lambda} (\Delta_z)_{i,j} \sgn(\Delta_z)_{i,j} \\
					 & = \left< \sgn(\Delta_z)_{\Lambda}, \Delta_z  \right> \\
 \end{split}
\end{equation*}
This assures that $\left< \sgn(\Omg_0 \X), \Delta_z  \right> \le \|(\Delta_z)_\Lambda\|_1$. For any $\Omg_0$, it assures that if $ \|(\Delta_z)_\Lambda\|_1 < \|(\Delta_z)_{\bar\Lambda}\|_1$, $\Omg_0$ is identifiable based on lemma \ref{lem:iffnoiseless}. Now, if we consider all $\Omg_0$ providing $q$-cosparse training samples, with some \textit{possible} cosparsity $\bar\Lambda$, the inequality (\ref{eq:identifiability}) assures the identifiability of $\Omg_0$.
\end{proof}
\begin{remark}
 Note that, not all index sets of cardinality $q$ can produce cosparsity patterns $\bar\Lambda$, given $\Omg_0$ \cite{Nam11b}. Similarly, not all sign patterns are feasible as $\sgn(\Omg_0 \X)$, when $\X$ is a variable matrix. As a result, the inequality condition of Theorem \ref{theo:identifiability}, \textit{i.e.} (\ref{eq:identifiability}), is \textit{just a sufficient condition} for the local identifiability of any $\Omg_0$'s, providing some $q$-cosparse training signals. While (\ref{eq:iffnoiseless}) is the necessary and sufficient condition for the local identifiability, we can empirically show that (\ref{eq:identifiability}) is not tight, see Figure \ref{identifiability}.
\end{remark}

\section{Deriving a simple linear representation of the constraints}\label{app:phi}

The optimisation problem (\ref{eq:noiselesslinerizedreparametrized}) has three constraints, where the last two are clearly linear, and they can be represented as,
\begin{equation}\label{eq:23cons}
 \begin{split}
 &\sum_{k} (\Delta_{z})_\mathsmaller{i\:k} \; (\mathbf{V}_{0})_{\mathsmaller{ k\,j}}= 0, \quad \forall i \in [1,n], \; j \in [1,l-m] \\
 & \sum_{k}  {\Delta_{z}}_\mathsmaller{\;i\,k} \; \mathbf{Q}_{\mathsmaller{\; k\, i}} = 0, \quad \forall i \in [1,n],
 \end{split} 
\end{equation}
 where $\mathbf{Q} := \V_1 \Sigma_{1}^{-2}\V_{1}^\mathsmaller{T} \mathbf{Z}_{\mathsmaller{0}}^\mathsmaller{T}  $. Let $\mathbf{A} := \mathbf{Z}_\mathsmaller{0} \V_1$. The first constraint can now be reformulated as, $\V_{1}^\mathsmaller{T} \Delta_{z}^\mathsmaller{T} \mathbf{A} + \mathbf{A}^\mathsmaller{T} \Delta_{z} \V_1 = \mathbf{0}$. To derive this equation in a similar form to (\ref{eq:23cons}), we reformulate $\Delta_{z} \V_{1} $, then right-multiply $(\Delta_{z} \V_{1})^\mathsmaller{T}$ with $\mathbf{A}$.

\begin{equation*}
 \left(\V_{1}^\mathsmaller{T} \Delta_{z}^\mathsmaller{T}\right)_{i \, j} =  \sum_{\mathsmaller{1\le r \le l}} {(\Delta_{z})}_{j \, r} \: (\V_{1})_{r \, i}  
\end{equation*}
\begin{equation*}
\begin{split}
 ((\V_{1}^\mathsmaller{T} \Delta_{z}^\mathsmaller{T})\, \mathbf{A})_{i \, j} &=  \sum_{\mathsmaller{1\le k \le n}}\; \left(\sum_{\mathsmaller{1\le r\le l}} {(\Delta_{z})}_{k \, r} \: (\V_{1})_{r \, i} \right) \mathbf{A}_{\mathsmaller{k \, j}}\\
&= \sum_{\mathsmaller{1\le k \le n}}\; \sum_{\mathsmaller{1\le r\le l}}  (\V_{1})_{r \, i} \: \mathbf{A}_{\mathsmaller{k \, j}} \, {(\Delta_{z})}_{k \, r} 
\end{split} 
\end{equation*}
We now reformulate the first constraint as,
\begin{equation}\label{eq:1cons}
\begin{split}
 \sum_{\mathsmaller{1\le k \le n}}  \sum_{\mathsmaller{1\le r\le l}} \! \left((\V_{1})_{r \, i} \: \mathbf{A}_{\mathsmaller{k \, j}}  + \, (\V_{1})_{r \, j} \: \mathbf{A}_{\mathsmaller{k \, i}}  \right)& {(\Delta_{z})}_{k \, r} = 0 \\  &\forall i,\,j \in [1,m] 
\end{split}
\end{equation}
We can now generate $\mathbf{\Psi}_\mathsmaller{(a + a (l-n) + n^\mathsmaller{2}) \times al}$ using the weights in (\ref{eq:23cons}) and (\ref{eq:1cons}), corresponding to $\Delta_{z}$, and make a linear presentation as $\mathbf{\Psi}  \delta_{z}^{v}= \mathbf{0}, \: \delta_{z}^{v} = \vect(\Delta_{z})$.

\ifCLASSOPTIONcaptionsoff
  \newpage
\fi

\bibliographystyle{IEEEtran}
\bibliography{biblio.bib}

\end{document}